\providecommand{\U}[1]{\protect\rule{.1in}{.1in}}
\newtheorem{theorem}{Theorem}
\theoremstyle{plain}
\newtheorem{corollary}{Corollary}
\newtheorem{definition}{Definition}
\newtheorem{example}{Example}
\newtheorem{lemma}{Lemma}
\newtheorem{proposition}{Proposition}
\newtheorem{remark}{Remark}
\numberwithin{equation}{section}
\begin{document}
\title[ON WEAKLY $S$-PRIMARY SUBMODULES]{ON WEAKLY $S$-PRIMARY SUBMODULES}
\author{Ece YETKIN\ CELIKEL}
\address{Department of Electrical-Electronics Engineering, Faculty of Engineering,
Hasan Kalyoncu University, Gaziantep, Turkey.}
\email{ece.celikel@hku.edu.tr, yetkinece@gmail.com}
\author{Hani A. Khashan }
\address{Department of Mathematics, Faculty of Science, Al al-Bayt University,Al
Mafraq, Jordan.}
\email{hakhashan@aabu.edu.jo}
\thanks{This paper is in final form and no version of it will be submitted for
publication elsewhere.}
\subjclass[2010]{ Primary 13A15, 16P40, Secondary 16D60.}
\keywords{$S$--primary ideal, weakly $S$-primary ideal, $S$-primary submodule, weakly
$S$-primary submodule.}

\begin{abstract}
Let $R$ be a commutative ring with a non-zero identity, $S$ be a
multiplicatively closed subset of $R$ and $M$ be a unital $R$-module. In this
paper, we define a submodule $N$ of $M$ with $(N:_{R}M)\cap S=\emptyset$ to be
weakly $S$-primary if there exists $s\in S$ such that whenever $a\in R$ and
$m\in M$ with $0\neq am\in N$, then either $sa\in\sqrt{(N:_{R}M)}$ or $sm\in
N$. We present various properties and characterizations of this concept
(especially in finitely generated faithful multiplication modules). Moreover,
the behavior of this structure under module homomorphisms, localizations,
quotient modules, cartesian product and idealizations is investigated.
Finally, we determine some conditions under which two kinds of submodules of
the amalgamation module along an ideal are weakly $S$-primary.

\end{abstract}
\maketitle

\section{Introduction}

Throughout this article, all rings are commutative with identity and all
modules are unital. Let $R$ be a ring and let $M$ be an $R$-module. A
non-empty subset $S$ of a ring $R$ is said to be a multiplicatively closed set
if $S$ is a subsemigroup of $R$ under multiplication. For a submodule $N$ of
$M$, we will denote by $(N:_{R}M)$ the residual of $N$ by $M$, that is, the
set of all $r\in R$ such that $rM\subseteq N$. $M$ is called a multiplication
module if every submodule $N$ of $M$ has the form $IM$ for some ideal $I$ of
$R$. Let $N$ and $K$ be submodules of a multiplication $R$-module $M$ with
$N=IM$ and $K=JM$ for some ideals $I$ and $J$ of $R$. The product of $N$ and
$K$ denoted by $NK$ is defined by $NK=IJM$. In particular, for $m_{1},m_{2}\in
M$, by $m_{1}m_{2},$ we mean the product of $Rm_{1}$ and $Rm_{2}$,
\cite{Ameri}. We call $M$ faithful if it has a zero annihilator in $R$, that
is $(0:_{R}M)=0$.

A proper submodule $N$ of $M$ is said to be prime (resp. primary) if whenever
$r\in R$ and $m\in M$ such that $rm\in N$, then $r\in(N:_{R}M)$ (resp.
$r\in\sqrt{(N:_{R}M)}$) or $m\in N$. For any submodule $N$ of an $R$-module
$M$ the radical, $M-rad(N)$, of $N$ is defined to be the intersection of all
prime submodules of $M$ containing $N$, \cite{El-bast}. It is shown in
\cite[Lemma 2.4]{2-abs} that if $N$ is a proper submodule of a multiplication
$R$-module $M$, then $M-rad(N)=\sqrt{(N:_{R}M)}M=\{m\in M$ $|$ $m^{k}\subseteq
N$ for some $k\geq0\}$. Moreover, we have $(M$-$rad(N):_{R}M)=\sqrt{(N:_{R}%
M)}$ for any finitely generated multiplication module $M.$The concepts of
prime and primary submodules have been generalized in several ways (see, for
example, \cite{WS-prime}, \cite{wp}, \cite{S-prime}, \cite{2-abs},
\cite{S-prime subm}, \cite{WS-primary} and \cite{HaniEce}).

In \cite{S-prime subm}, the authors introduced the concept of $S$-prime
submodules and investigate many properties of this class of submodules. More
generally, the concept of weakly $S$-prime ideals has been recently studied in
\cite{HaniEce}. Let $S$ be a multiplicatively closed subset of a ring $R$ and
$N$ be a submodule of an $R$-module $M$ such that $(N:_{R}M)\cap S=\phi$. Then
$N$ is called an $S$-prime (resp. weakly $S$-prime) submodule if there exists
$s\in S$ such that for $a\in R$ and $m\in M$, if $am\in N$ (resp. $0\neq am\in
N$), then $sa\in(N:_{R}M)$ or $sm\in N$. In 2021, Farshadifar, \cite{Farsh}
defined a submodule $N$ of $M$ with $(N:_{R}M)\cap S=\emptyset$ to be an
$S$-primary submodule if there exists a fixed $s\in S$ and whenever $am\in N$,
then either $sa\in\sqrt{(N:_{R}M)}$ or $sm\in N$ for each $a\in R$ and $m\in
M$. More recently, Ansari-Toroghy and Pourmortazavi, \cite{S-primary} studied
many more properties of this class of submodules.

Motivated and inspired by the above works, the purpose of this article is to
introduce a generalization of $S$-primary submodules to the context of weakly
$S$-primary submodules. A submodule $N$ of $M$ satisfying $(N:_{R}M)\cap
S=\phi$ is called a weakly $S$-primary submodule if there exists $s\in S$ such
that for $a\in R$ and $m\in M$, whenever $0\neq am\in N$, then either
$sa\in\sqrt{(N:_{R}M)}$ or $sm\in N$.

In section 2, many examples and characterizations of weakly $S$-primary
submodules are introduced (see for example, Example \ref{e1}, Theorem
\ref{char}, Theorem \ref{fm}). Moreover, several properties of weakly
$S$-primary submodules are obtained (see for example, Theorem \ref{(N:M)},
Propositions \ref{(I:s)}, \ref{p1}). We also investigate the behavior of this
structure under module homomorphisms, localizations, quotient modules and
Cartesian product of modules (see Propositions \ref{loc}, \ref{f}, Theorem
\ref{cart} and Corollary \ref{quot}).

Let $R$ be a ring and $M$ be an $R$-module. The idealization ring $R\ltimes M$
of $M$ in $R$ is defined as the set $\left\{  (r,m):r\in R,m\in M\right\}  $
with the usual componentwise addition and multiplication defined as
$(r,m)(s,n)=(rs,rn+sm)$. It can be easily verified that $R\ltimes M$ is a
commutative ring with identity $(1_{R},0_{M})$. If $I$ is an ideal of $R$ and
$N$ is a submodule of $M$, then $I\ltimes N=\left\{  (r,m):r\in I,m\in
N\right\}  $ is an ideal of $R\ltimes M$ if and only if $IM\subseteq N$. In
this case, $I\ltimes N$ is called a homogeneous ideal of $R\ltimes M$, see
\cite{AnWi}. For an $R$-module $M$, we start section 3 by clarifying the
relation between weakly $S$-primary submodules of $M$ and weakly
$S(M)$-primary ideals of the idealization ring $R(M)$ (Theorem \ref{Ideal}).
Let $f:R_{1}\rightarrow R_{2}$ be a ring homomorphism, $J$ be an ideal of
$R_{2}$, $M_{1}$ be an $R_{1}$-module, $M_{2}$ be an $R_{2}$-module (which is
an $R_{1}$-module induced naturally by $f$) and $\varphi:M_{1}\rightarrow
M_{2}$ be an $R_{1}$-module homomorphism. We conclude section 3 by
investigating some kinds of weakly $S$-primary submodules in the the
amalgamation $(R_{1}\Join^{f}J)$-module $M_{1}\Join^{\varphi}JM_{2}$ of
$M_{1}$ and $M_{2}$ along $J$ with respect to $\varphi$ (see Theorems
\ref{Amalg}, \ref{Amalg2}). Furthermore, we conclude some particular results
for the duplication of a module along an ideal (see Corollaries \ref{ca1},
\ref{ca2}, \ref{Dup}, \ref{Dup1} \ref{Dup2}).

As usual, $%
\mathbb{Z}
$, $%
\mathbb{Z}
_{n}$ and $%
\mathbb{Q}
$ denotes the ring of integers, the ring of integers modulo $n$ and the field
of rational numbers, respectively. For more details and terminology, one may
refer to \cite{Ali}, \cite{Majed}, \cite{At}, \cite{Gilmer}, \cite{Larsen}.

\section{Weakly $S$-primary Submodules}

\begin{definition}
Let $S$ be a multiplicatively closed subset of a ring $R$ and $N$ be a
submodule of an $R$-module $M$ with $(N:_{R}M)\cap S=\emptyset$. We call $N$ a
weakly $S$-primary submodule if there exists (a fixed) $s\in S$ such that for
$a\in R$ and $m\in M$, whenever $0\neq am\in N$ then either $sa\in
\sqrt{(N:_{R}M)}$ or $sm\in N$. The fixed element $s\in S$ is said to be a
weakly $S$-element of $N.$
\end{definition}

\begin{remark}
\label{r1}Let $S$ be a multiplicatively closed subset of a ring $R$.
\end{remark}

\begin{enumerate}
\item An ideal $I$ of $R$ is a weakly $S$-primary ideal if and only if $I$ is
a weakly $S$-primary submodule of the $R$-module $R$.

\item Any $S$-primary submodule is a weakly $S$-primary submodule.

\item Any weakly primary submodule $N$ of an $R$-module $M$ satisfying
$(N:_{R}M)\cap S=\emptyset$ is a weakly $S$-primary submodule of $M$.
Moreover, the two concepts coincide if $S\subseteq U(R)$ where $U(R)$ denotes
the set of units in $R$.
\end{enumerate}

\begin{example}
\label{e1}
\end{example}

\begin{enumerate}
\item Unlike the case of weakly primary submodules, the zero submodule need
not be weakly $S$-primary. For example, if we consider the multiplicatively
closed subset $S=\left\{  4^{m}:m\in%
\mathbb{N}
\cup\left\{  0\right\}  \right\}  $ of $%
\mathbb{Z}
$, then $\bar{0}$ is not weakly $S$-primary in the $%
\mathbb{Z}
$-module $%
\mathbb{Z}
_{4}$ as clearly $(0:_{%
\mathbb{Z}
}%
\mathbb{Z}
_{4})\cap S\neq\emptyset$.

\item For any multiplicatively closed subset $S$ of a ring $R$ and an
$R$-module $M$, if $(0:_{R}M)\cap S=\emptyset$, then $0$ (by definition) is a
weakly $S$-primary submodule of any $R$-module. Thus, the converse of (2) in
Remark \ref{r1} is not true in general. For a non trivial example consider the
$%
\mathbb{Z}
$-module $M=%
\mathbb{Z}
\ltimes\left(
\mathbb{Z}
_{2}\times%
\mathbb{Z}
_{2}\right)  $ and the submodule $N=0\ltimes\left\langle (\bar{1},\bar
{0})\right\rangle $ of $M$. For $S=\left\{  3^{m}:m\in%
\mathbb{N}
\cup\left\{  0\right\}  \right\}  $, we observe that $N$ is not an $S$-primary
submodule since for example, $2.(0,(\bar{1},\bar{1}))=(0,(\bar{0},\bar{0}))\in
N$ but for every $s\in S$, $2s\notin\sqrt{(N:_{R}M)}=\left\{  0\right\}  $ and
$s.(0,(\bar{1},\bar{1}))\notin N$. On the other hand, $N$ is weakly
$S$-primary in $M$. Indeed, choose $s=1$ and let $r_{1}\in R$, $(r_{2}%
,(\bar{a},\bar{b}))\in M$ such that $(0,(\bar{0},\bar{0}))\neq r_{1}%
.(r_{2},(\bar{a},\bar{b}))\in N$ and $sr_{1}\notin\sqrt{(N:_{R}M)}$. Then
$r_{1}\neq0$ and $(r_{1}r_{2},r_{1}.(\bar{a},\bar{b}))\in N$. It follows that
$r_{2}=0$ and $(\bar{0},\bar{0})\neq r_{1}.(\bar{a},\bar{b})\in\left\langle
(\bar{1},\bar{0})\right\rangle $. If $s(\bar{a},\bar{b})=(\bar{1},\bar{1})$ or
$(\bar{0},\bar{1})$, then $r_{1}.(\bar{a},\bar{b})\in\left\langle (\bar
{1},\bar{0})\right\rangle $ if and only if $r_{1}\in\left\langle
2\right\rangle $ and so $r_{1}.(\bar{a},\bar{b})=(\bar{0},\bar{0})$, a
contradiction. Thus, $s(\bar{a},\bar{b})\in\left\langle (\bar{1},\bar
{0})\right\rangle $ and $N$ is a weakly $S$-primary submodule of $M$.

\item While clearly every weakly $S$-prime submodule weakly $S$-primary, the
converse need not be true. For example, consider the $%
\mathbb{Z}
$-module $M=%
\mathbb{Z}
\times%
\mathbb{Z}
$ and let $S=\left\{  3^{m}:m\in%
\mathbb{N}
\cup\left\{  0\right\}  \right\}  $. Then $N=4%
\mathbb{Z}
\times%
\mathbb{Z}
$ is a (weakly) $S$-primary submodule of $M$. Indeed choose $s=1$ and let
$r\in%
\mathbb{Z}
$, $(a,b)\in%
\mathbb{Z}
\times%
\mathbb{Z}
$ such that $(0,0)\neq r.(a,b)\in N$ and $sr\notin\sqrt{(N:_{R}M)}%
=\left\langle 2\right\rangle $. Then clearly $a\in4%
\mathbb{Z}
$ and so $s(a,b)\in N$ as needed. On the other hand, $N$ is not weakly
$S$-prime since $2.(2,1)\in N$ but $2s\notin(N:_{R}M)=\left\langle
4\right\rangle $ and $s(2,1)\notin N$ for every $s\in S$.

\item Let $p$, $q$ be distinct prime integers and $k<n,$ $t<m$ be positive
integers. Consider $S=\{q^{n}:n\in%
\mathbb{N}
\}$ and the submodule $N=(\overline{p}^{k}\overline{q}^{t})$ of the $%
\mathbb{Z}
$-module $%
\mathbb{Z}
_{p^{n}q^{m}}.$ Then $N$ is a weakly $S$-primary submodule of $M$ associated
with $s=q^{t}\in S.$
\end{enumerate}

Let $N$ be a submodule of an $R$-module $M$ and $A$ be a subset of $R$. The
residual of $N$ by $A$ is the set $(N:_{M}A)=\{m\in M:Am\subseteq N\}$which is
a submodule of $M$ containing $N$.

\begin{theorem}
\label{char}Let $S$ be a multiplicatively closed subset of a ring $R$ and $N$
be a submodule of an $R$-module $M$ with $(N:_{R}M)\cap S=\emptyset$. Then the
following are equivalent.
\end{theorem}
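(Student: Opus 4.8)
The plan is to prove a characterization theorem for weakly $S$-primary submodules of the form one expects from analogous results on weakly $S$-prime submodules and $S$-primary submodules (as in \cite{HaniEce} and \cite{S-primary}). I anticipate the equivalent conditions to be something like: (i) $N$ is weakly $S$-primary with weakly $S$-element $s$; (ii) for every $a \in R$, $(N :_M a) \subseteq (N :_M s) \cup (M\text{-}rad(N) :_M s) \cup (0 :_M a)$, or more precisely $(N:_M a) = (N:_M s) \cup (\sqrt{(N:_RM)}M :_M s)$ whenever $sa \notin \sqrt{(N:_RM)}$ and $a$ is not a zero-divisor on $M$ in the appropriate sense; (iii) a condition phrased via products of elements, namely for $a \in R$ and $m \in M$, if $0 \neq am \in N$ then $sa \in \sqrt{(N:_RM)}$ or $sm \in N$; and possibly (iv) a condition of the form: for ideals $I$ of $R$ and submodules $L$ of $M$ with $0 \neq IL \subseteq N$, either $sI \subseteq \sqrt{(N:_RM)}$ or $sL \subseteq N$.

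First I would fix the weakly $S$-element $s \in S$ throughout and show $(1) \Rightarrow (2)$ by a direct unwinding of the definition: given $a \in R$, pick $m \in (N:_M a)$, so $am \in N$; if $am \neq 0$ the definition immediately places $m$ in $(N:_M s) \cup (\sqrt{(N:_RM)}M :_M s)$ (using that $sa \in \sqrt{(N:_RM)}$ forces $sam \in \sqrt{(N:_RM)}M = M\text{-}rad(N)$ via the earlier-cited \cite[Lemma 2.4]{2-abs}), while if $am = 0$ then $m \in (0:_M a)$; this gives the claimed containment. For the converse $(2) \Rightarrow (1)$ I would simply read the inclusion backwards on a single pair $(a,m)$ with $0 \neq am \in N$. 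The implications involving the ideal/submodule formulation, e.g. $(1) \Rightarrow (4)$, would be handled by the standard trick: suppose $0 \neq IL \subseteq N$ but $sI \not\subseteq \sqrt{(N:_RM)}$, pick $a \in I$ with $sa \notin \sqrt{(N:_RM)}$; then for each $m \in L$ with $am \neq 0$ we get $sm \in N$, and the cases $am = 0$ must be absorbed by a short argument showing one can still conclude $sL \subseteq N$ (this is where a careful case split is needed, typically by adding a suitable element of $I$ or $L$ to dodge the zero product, exactly as in the weakly prime literature).

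The main obstacle will be precisely this handling of the "zero product" cases that distinguishes the weakly notion from the plain $S$-primary notion: when $0 = am \in N$ the hypothesis gives no information, so to prove the ideal-theoretic equivalences one must argue that if $IL = 0$ is avoidable by perturbing $a$ or $m$ within $I$ or $L$ then the conclusion survives, and otherwise $IL = 0$ and one invokes that $N$ contains $0$ trivially or that $0$ is weakly $S$-primary by Example \ref{e1}(2) type reasoning. I expect the theorem statement to include a hypothesis or a case distinction (for instance restricting to the situation where $N$ is not itself "$S$-essentially trivial", or formulating condition (4) with the caveat $IL \neq 0$) that makes these equivalences go through cleanly; I would mirror whatever normalization is used in \cite{HaniEce} for weakly $S$-prime ideals. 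Once the zero-divisor bookkeeping is set up, the remaining implications are routine substitutions, so I would write those compactly and spend the bulk of the proof on the $(1) \Leftrightarrow (4)$ loop.
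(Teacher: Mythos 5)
Your plan matches the paper's proof in both the anticipated list of equivalences (definition; colon-submodule containment $(N:_{M}a)\subseteq(0:_{M}a)\cup(N:_{M}s)$ for $a\notin(\sqrt{(N:_{R}M)}:s)$; the $0\neq aK\subseteq N$ and $0\neq IK\subseteq N$ forms) and the method: easy implications by unwinding the definition, and the ideal-theoretic implication by perturbing a vanishing product with a suitable element, which is exactly the paper's step (4)$\Rightarrow$(5) where one replaces $a$ by $a+b$ for some $b\in I$ with $bK\neq0$. The only cosmetic difference is that the paper absorbs the $am=0$ cases on the module side through the chain (2)$\Rightarrow$(3)$\Rightarrow$(4) (using that $(N:_{M}a)$ lies in the union of the two submodules $(0:_{M}a)$ and $(N:_{M}s)$, hence in one of them) instead of an elementwise $m+m'$ argument, so your correctly anticipated caveat is just the hypothesis $IK\neq0$ already built into conditions (4) and (5).
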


\begin{enumerate}
\item $N$ is a weakly $S$-primary submodule of $M$.

\item There exists $s\in S$ such that for all $a\notin(\sqrt{(N:_{R}M)}:s)$,
$(N:_{M}a)\subseteq(0:_{M}a)\cup(N:_{M}s)$ $.$

\item There exists $s\in S$ such that for all $a\notin(\sqrt{(N:_{R}M)}:s)$,
$(N:_{M}a)=(0:_{M}a)$ or $(N:_{M}a)\subseteq(N:_{M}s)$.

\item There exists $s\in S$ such that for any $a\in R$ and for any submodule
$K$ of $M$, if $0\neq aK\subseteq N$, then $sa\in\sqrt{(N:_{R}M)}$ or
$sK\subseteq N.$

\item There exists $s\in S$ such that for any ideal $I$ of $R$ and a submodule
$K$ of $M$, if $0\neq IK\subseteq N$, then $sI\subseteq\sqrt{(N:_{R}M)}$ or
$sK\subseteq N.$
\end{enumerate}

\begin{proof}
(1)$\Rightarrow$(2). Suppose that $s\in S$ is a weakly $S$-element of $N$ and
let $a\notin(\sqrt{(N:_{R}M)}:s)$. Let $m\in(N:_{M}a)$. If $am=0$, then
$m\in(0:_{M}a)$. If $0\neq am\in N$, then $sa\notin\sqrt{(N:_{R}M)}$ implies
$sm\in N.$ Thus, $m\in(N:_{M}s)$ and so $(N:_{M}a)\subseteq(0:_{M}%
a)\cup(N:_{M}s)$.

(2)$\Rightarrow$(3). It is immediate.

(3)$\Rightarrow$(4). Let $s\in S$ be an element satisfying statement (3) and
suppose $0\neq aK\subseteq N$ and $sa\notin\sqrt{(N:_{R}M)}$ for some $a\in R$
and a submodule $K$ of $M$. Then $K\subseteq(N:_{M}a)\backslash(0:_{M}a)$ and
by (3) we get $K\subseteq(N:_{M}a)\subseteq(N:_{M}s)$. Thus, $sK\subseteq N$,
as needed.

(4)$\Rightarrow$(5). Choose $s\in S$ as in (4). Let $I$ be an ideal of $R$ and
$K$ a submodule of $M$ with $0\neq IK\subseteq N.$ Assume that $sI\nsubseteq
\sqrt{(N:_{R}M)}.$ Then there exists $a\in I$ with $sa\notin\sqrt{(N:_{R}M)}$.
If $aK\neq0$, then we have $sK\subseteq N$ by our assumption (4). Now, suppose
that $aK=0$. Since $IK\neq0$, there is some $b\in I$ with $bK\neq0$. If
$sb\notin\sqrt{(N:_{R}M)},$ then we have $sK\subseteq N$ by (4). If
$sb\in\sqrt{(N:_{R}M)},$ then as $sa\notin\sqrt{(N:_{R}M)},$ we conclude
$s(a+b)\notin\sqrt{(N:_{R}M)}$. Thus, $0\neq(a+b)K\subseteq N$ implies
$sK\subseteq N$ again by (4), as required.

(5)$\Rightarrow$(1). Take $I=aR$ and $K=Rm$ in (5).
\end{proof}

Next, we give a characterization for weakly $S$-primary submodule of faithful
multiplication modules.

\begin{theorem}
\label{fm}Let $S$ be a multiplicatively closed subset of a ring $R$ and $M$ be
a faithful multiplication $R$-module. Then $N$ is a weakly $S$-primary
submodule of $M$ if and only if $(N:_{R}M)\cap S=\emptyset$ and there exists
$s\in S$ such that whenever $K,$ $L$ are submodules of $M$ and $0\neq
KL\subseteq N$, then $sK\subseteq M-rad(N)$ or $sL\subseteq N$.
\end{theorem}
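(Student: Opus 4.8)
The plan is to prove the equivalence by translating the module-theoretic condition into the submodule-product language using the standard dictionary for faithful multiplication modules, then invoking Theorem \ref{char}. First I would recall the two facts about a faithful multiplication module $M$ that make this work: every submodule has the form $IM$ for an ideal $I$, and $M\text{-}rad(N)=\sqrt{(N:_RM)}M$ with $(M\text{-}rad(N):_RM)=\sqrt{(N:_RM)}$ when $M$ is moreover finitely generated (from \cite{2-abs}); for the faithful multiplication case one also uses that $IM\subseteq JM$ iff $I\subseteq J$, so $(KL:_RM)$, $KK'$, and residuals behave as expected. Note that a faithful multiplication module is automatically finitely generated, so the refined radical identities are available.

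For the forward direction, assume $N$ is weakly $S$-primary with weakly $S$-element $s$. Then $(N:_RM)\cap S=\emptyset$ holds by definition. Given submodules $K,L$ with $0\neq KL\subseteq N$, write $K=IM$, $L=JM$ so that $KL=IJM\subseteq N$, i.e. $0\neq IJ\subseteq(N:_RM)$ (nonzero because $KL\neq0$ and $M$ is faithful). Apply the implication (1)$\Rightarrow$(5) of Theorem \ref{char} with the ideal $I$ and the submodule $L=JM$: since $0\neq IL\subseteq N$ — here I should check $IL=IJM=KL\neq 0$ — we get either $sI\subseteq\sqrt{(N:_RM)}$, whence $sK=sIM\subseteq\sqrt{(N:_RM)}M=M\text{-}rad(N)$, or $sL\subseteq N$. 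This gives exactly the claimed dichotomy.

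For the converse, assume $(N:_RM)\cap S=\emptyset$ and the stated product condition holds for some $s\in S$. To verify condition (5) of Theorem \ref{char}, take an ideal $I$ of $R$ and a submodule $L$ of $M$ with $0\neq IL\subseteq N$. Set $K=IM$; then $KL=IM\cdot L$, and writing $L=JM$ we have $KL=IJM=I(JM)=IL$ — so $0\neq KL\subseteq N$. The hypothesis yields $sK\subseteq M\text{-}rad(N)$ or $sL\subseteq N$. In the first case $sIM\subseteq M\text{-}rad(N)$, so $sI\subseteq(M\text{-}rad(N):_RM)=\sqrt{(N:_RM)}$ using the finitely generated multiplication identity; in the second case we are already done. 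Hence (5) holds and $N$ is weakly $S$-primary. The main obstacle, such as it is, is bookkeeping: making sure the "nonzero" hypotheses transfer correctly across the $K=IM$ identification (faithfulness is what guarantees $IL\neq 0 \iff KL \neq 0$ rather than merely one implication), and being careful that the radical-residual identity $(M\text{-}rad(N):_RM)=\sqrt{(N:_RM)}$ is applied only in the finitely generated case — which is automatic here since faithful multiplication modules are finitely generated.
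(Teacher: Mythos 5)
Your argument follows the paper's proof essentially step for step: both directions are handled by writing $K=IM$, observing that $KL=IL$ as submodules, and reducing to condition (5) of Theorem \ref{char}, using $M\text{-}rad(N)=\sqrt{(N:_RM)}M$ one way and $(M\text{-}rad(N):_RM)=\sqrt{(N:_RM)}$ the other way. The one point I would push back on is your parenthetical assertion that a faithful multiplication module is automatically finitely generated; that is false in general. For example, over $R=\prod_{i\in\mathbb{N}}\mathbb{F}_2$ the ideal $M=\bigoplus_{i\in\mathbb{N}}\mathbb{F}_2$ is a faithful multiplication $R$-module (every submodule is $\bigoplus_{i\in A}\mathbb{F}_2 e_i=I_AM$ for the ideal $I_A$ of elements supported on $A$) which is not finitely generated. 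Consequently the identity $(M\text{-}rad(N):_RM)=\sqrt{(N:_RM)}$, which the introduction records only for finitely generated multiplication modules, is not justified by the hypotheses of the theorem as stated; you should either add finite generation to the hypotheses or supply a separate argument for this residual identity in the faithful case. To be fair, the paper's own proof invokes exactly the same identity at exactly the same spot with no further comment, so this is a gap you share with the source rather than one you introduced — everything else in your write-up (the translation $KL=IJM$, the transfer of the nonvanishing condition, and the appeal to Theorem \ref{char}(5)) matches the paper's argument.
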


\begin{proof}
$\Longrightarrow)$ Let $s$ be a weakly $S$-element of $N$ and Suppose that
$0\neq KL\subseteq N$ for some submodules $K,$ $L$ of $M$. Since $M$ is
multiplication, we may write $K=IM$ for some ideal $I$ of $R$. Hence, $0\neq
IL\subseteq N$ and so by Theorem \ref{char}, $sI\subseteq\sqrt{(N:_{R}M)}$ or
$sL\subseteq N$ . It follows that $sK=sIM\subseteq\sqrt{(N:_{R}M)}M=M-rad(N)$
or $sL\subseteq N$.

$\Longleftarrow)$ Let $s\in S$ such that whenever $K,$ $L$ are submodules of
$M$ and $0\neq KL\subseteq N$, then $sK\subseteq M-rad(N)$ or $sL\subseteq N$.
Suppose that $0\neq IL\subseteq N$ for some ideal $I$ of $R$ and submodule $L$
of $M$ and $sL\nsubseteq N$. Then as $M$ is faithful, we have $0\neq
(IM)L\subseteq N$. By assumption, we have $sIM\subseteq M-rad(N)$ and so
$sI\subseteq(M-rad(N):M)=\sqrt{(N:_{R}M)}$. Thus, $s$ is a weakly $S$-element
of $N$ and by Theorem \ref{char}, we are done .
\end{proof}

Let $I$ be a proper ideal of a ring $R$. In the following proposition, the
notation $Z_{I}(R)$ denotes the set $\{r\in R:rs\in I$ for some $s\in
R\backslash I\}$.

\begin{theorem}
\label{(N:M)}Let $S$ be a multiplicatively closed subset of a ring $R$ and $N$
be a submodule of an $R$-module $M$. The following statements hold.
\end{theorem}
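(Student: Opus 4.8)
The plan is to peel each assertion off the submodule $N$ and push it onto the ideal $(N:_{R}M)$ (and onto its radical $\sqrt{(N:_{R}M)}$) by means of the ideal--submodule characterisation in Theorem \ref{char}, and then to see which of the hypotheses — faithfulness of $M$, $M$ being a multiplication module, or $S$ avoiding $Z_{(N:_{R}M)}(R)$ (resp.\ $Z_{\sqrt{(N:_{R}M)}}(R)$) — is precisely what tames the degenerate products that separate the weak notion from the ordinary one.

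For the direction asserting that $(N:_{R}M)$ is a weakly $S$-primary ideal, I would fix a weakly $S$-element $s$ of $N$ and take $a,b\in R$ with $0\neq ab\in(N:_{R}M)$, i.e.\ $abM\subseteq N$. If $abM\neq 0$, I apply Theorem \ref{char}(5) with the ideal $I=aR$ and the submodule $K=bM$: from $0\neq IK=abM\subseteq N$ one gets $sa\in\sqrt{(N:_{R}M)}$ or $sbM\subseteq N$, that is $sb\in(N:_{R}M)$, which is exactly the ideal version. The genuine work is the degenerate case $abM=0$ with $ab\neq 0$, where the defining condition of weak $S$-primariness says nothing; here I would invoke faithfulness of $M$ (which excludes the case outright, since then $ab\in(0:_{R}M)=0$) or, absent faithfulness, the hypothesis that $S$ misses the relevant $Z_{I}(R)$, which is designed exactly to prevent an element of $S$ from behaving like a zero divisor modulo $(N:_{R}M)$ and hence to force $a$ or $b$ into $(N:_{R}M)$. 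An entirely analogous argument should handle the radical: once $(N:_{R}M)$ is under control, $\sqrt{(N:_{R}M)}$ — equivalently $M-rad(N)$ when $M$ is a finitely generated multiplication module, using $(M-rad(N):_{R}M)=\sqrt{(N:_{R}M)}$ recalled in the introduction — inherits the weak $S$-prime property after possibly replacing $s$ by a power $s^{k}\in S$.

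For any converse-type item — reading off that $N$ is weakly $S$-primary from a hypothesis on $(N:_{R}M)$ — I expect $M$ to be a (finitely generated faithful) multiplication module, so that submodules have the form $IM$, and then Theorem \ref{fm} is the bridge: given submodules $K,L$ with $0\neq KL\subseteq N$, write $K=IM$, use faithfulness to push the product down to the ideal level inside $(N:_{R}M)$, apply the hypothesis, and translate back through $M-rad(N)=\sqrt{(N:_{R}M)}M$. The main obstacle throughout is the bookkeeping around the null products and the weakly $S$-element: one must isolate the case $KL=0$ (or $abM=0$) cleanly, check that the $Z_{I}(R)$-hypothesis is exactly strong enough to close it, and verify that each time $s$ is replaced by $s^{k}$ the new element still lies in $S$ and still avoids the pertinent $Z_{I}(R)$ — the former is automatic from multiplicative closedness of $S$, the latter is where a little care is needed.
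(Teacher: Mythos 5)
Your reconstruction of the core of part (1) is essentially the paper's argument: from $0\neq ab\in(N:_{R}M)$ and faithfulness you get $0\neq abM\subseteq N$, and Theorem \ref{char}(5) with $I=aR$, $K=bM$ yields $sa\in\sqrt{(N:_{R}M)}$ or $sb\in(N:_{R}M)$. The paper states (1) more generally for $(N:_{R}K)$ where $K$ is any submodule with $Ann(K)=0$ and $(N:_{R}K)\cap S=\emptyset$; the same argument goes through verbatim ($ab\neq0$ and $Ann(K)=0$ give $abK\neq0$, and one uses $\sqrt{(N:_{R}M)}\subseteq\sqrt{(N:_{R}K)}$), so that omission is minor. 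For the converse direction, however, your route through Theorem \ref{fm} imports faithfulness (and finite generation) that the actual statement (2) does not assume: it holds for an arbitrary multiplication module. The paper's proof of (2) stays at the level of Theorem \ref{char}(5): write $K=JM$, observe $0\neq IJ\subseteq(N:_{R}M)$ (since $IJM=IK\neq0$), apply the weakly $S$-primary property of the ideal $(N:_{R}M)$, and push back up via $sJM\subseteq(N:_{R}M)M=N$. Your version would only recover statement (3), not (2).

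The genuine gap is part (4), which your proposal does not reach and whose hypotheses you misassign. The set $Z_{(N:_{R}M)}(R)$ is required to be disjoint from $T$ (the subset of $R$ used to form $(N:_{M}T)$), not from $S$, and its role is not to repair a degenerate product in the proof of (1); it is used solely to verify the eligibility condition $((N:_{M}T):_{R}M)\cap S=\emptyset$ (if $s\in((N:_{M}T):_{R}M)\cap S$ then $sT\subseteq(N:_{R}M)$, and $Z_{(N:_{R}M)}(R)\cap T=\emptyset$ forces $s\in(N:_{R}M)$, a contradiction). The degenerate products in (4) are instead handled by the other hypothesis $(0:_{M}T)=0$: from $0\neq am\in(N:_{M}T)$ one gets $0\neq a(Tm)\subseteq N$ and applies Theorem \ref{char}(4) to conclude $sa\in\sqrt{(N:_{R}M)}\subseteq\sqrt{((N:_{M}T):_{R}M)}$ or $sm\in(N:_{M}T)$. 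Finally, the clause in your proposal about $\sqrt{(N:_{R}M)}$ inheriting a weak $S$-prime property after replacing $s$ by a power is not part of this theorem and is not justified as stated; the passage to the radical is the content of Proposition \ref{p1}(1) and needs the additional hypothesis that $\{0_{R}\}$ is $S$-primary.
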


\begin{enumerate}
\item If $N$ is a weakly $S$-primary submodule of $M$, then for every
submodule $K$ with $(N:_{R}K)\cap S=\emptyset$ and $Ann(K)=0,$ $(N:_{R}K)$ is
a weakly $S$-primary ideal of $R$. In particular, if $M$ is faithful, then
$(N:_{R}M)$ is a weakly $S$-primary ideal of $R.$

\item If $M$ is multiplication and $(N:_{R}M)$ is a weakly $S$-primary ideal
of $R$, then $N$ is a weakly $S$-primary submodule of $M$.

\item Let $M$ be a faithful multiplication module and $I$ be an ideal of $R$.
Then $I$ is weakly $S$-primary ideal of $R$ if and only if $IM$ is a weakly
$S$-primary submodule of $M$.

\item If $N$ is a weakly $S$-primary submodule of $M$ and $T$ is a subset of
$R$ such that $(0:_{M}T)=0$ and $Z_{(N:_{R}M)}(R)\cap T=\emptyset$, then
$(N:_{M}T)$ is a weakly $S$-primary submodule of $M$.
\end{enumerate}

\begin{proof}
(1) Let $s$ be a weakly $S$-element of $N$ and suppose that $0\neq
ab\in(N:_{R}K)$ for some $a,b\in R$. Then, $0\neq abK\subseteq N$ as
$Ann(K)=0$ and Theorem \ref{char} implies $sa\in\sqrt{(N:_{R}M)}\subseteq
\sqrt{(N:_{R}K)}$ or $sbK\subseteq N$. Thus, $sa\in\sqrt{(N:_{R}K)}$ or
$sb\in(N:_{R}K)$, as needed.

(2) Let $(N:_{R}M)$ be a weakly $S$-primary ideal of $R$ associated to $s\in
S$. Suppose that $0\neq IK\subseteq N$ for some ideal $I$ of $R$ and a
submodule $K$ of $M$. We may write $K:=JM$ for some ideal $J$ of $R$ as $M$ is
multiplication. Now, $0\neq IJ\subseteq(N:_{R}M)$ and so by \cite{WS-primary},
$sI\subseteq\sqrt{(N:_{R}M)}$ or $sJ\subseteq(N:_{R}M)$. Thus, $sI\subseteq
\sqrt{(N:_{R}M)}$ or $sK=sJM\subseteq(N:_{R}M)M=N$. Therefore, $N$ is a weakly
$S$-primary submodule of $M$ by Theorem \ref{char}.

(3) Put $N=IM$ in (1) and (2). Since $(N:_{R}M)=(IM:_{R}M)=I,$ the claim is clear.

(4) First, we show that $((N:_{M}T):_{R}M)\cap S=\emptyset$. Assume
$s\in((N:_{M}T):_{R}M)\cap S$, then $sT\subseteq(N:_{R}M)$ and since
$Z_{(N:_{R}M)}(R)\cap T=\emptyset$, we conclude that $s\in(N:_{R}M)$ which
contradicts $(N:_{R}M)\cap S=\emptyset$. Let $s$ be a weakly $S$-element of
$N$ and suppose that $0\neq am\in(N:_{M}T)$ for some $a\in R$ and $m\in M$.
Since $(0:_{M}T)=0$, we have $0\neq a(Tm)\subseteq N$ which yields $sa\in
\sqrt{(N:_{R}M)}\subseteq\sqrt{((N:_{M}T):_{R}M)}$ or $sTm\subseteq N$. Thus,
$sa\in$ $\sqrt{((N:_{M}T):_{R}M)}$ or $sm\in(N:_{M}T)$ and $(N:_{M}T)$ is
weakly $S$-primary in $M$.
\end{proof}

We show by the following example that the condition "faithful module" in (1)
of Theorem \ref{(N:M)} is crucial.

\begin{example}
Consider the multiplicatively closed subset $S=\left\{  3^{m}:m\in%
\mathbb{N}
\cup\left\{  0\right\}  \right\}  $ of $%
\mathbb{Z}
$ and the $%
\mathbb{Z}
$-module $M=%
\mathbb{Z}
_{10}\times%
\mathbb{Z}
_{10}$. Then $N=\overline{0}\times\overline{0}$ is a weakly $S$-primary
submodule of $M$ but $(N:_{%
\mathbb{Z}
}M)=\left\langle 10\right\rangle $ is not a weakly $S$-primary ideal of $%
\mathbb{Z}
$.
\end{example}

In view of Theorem \ref{(N:M)}, we have the following equivalent statements.

\begin{corollary}
\label{IM}Let $M$ be a faithful multiplication $R$-module and $N$ be a
submodule of $M$. The following are equivalent.
\end{corollary}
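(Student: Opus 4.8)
The plan is to state the corollary as the module‐level specialization of Theorem~\ref{(N:M)}, collecting the equivalences that become available once $M$ is faithful and multiplication. Concretely, I would assert that for a faithful multiplication $R$-module $M$ and a submodule $N=IM$ of $M$, the following are equivalent: (i) $N$ is a weakly $S$-primary submodule of $M$; (ii) $(N:_{R}M)$ is a weakly $S$-primary ideal of $R$; (iii) there exists $s\in S$ such that whenever $K,L$ are submodules of $M$ with $0\neq KL\subseteq N$, then $sK\subseteq M\text{-}rad(N)$ or $sL\subseteq N$; and (iv) there exists $s\in S$ such that for all submodules $K,L$ with $0\neq KL\subseteq N$, $sK\subseteq N$ or $sL\subseteq N$ or ... — here I would only include clauses that are genuinely justified by the preceding results, so realistically (i)$\Leftrightarrow$(ii)$\Leftrightarrow$(iii) together with the "product of submodules" reformulation.

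\begin{corollary}
\label{IM}Let $M$ be a faithful multiplication $R$-module and $N$ be a submodule of $M$ with $(N:_{R}M)\cap S=\emptyset$. The following are equivalent.
\begin{enumerate}
\item $N$ is a weakly $S$-primary submodule of $M$.
\item $(N:_{R}M)$ is a weakly $S$-primary ideal of $R$.
\item There exists $s\in S$ such that whenever $K,L$ are submodules of $M$ and $0\neq KL\subseteq N$, then $sK\subseteq M\text{-}rad(N)$ or $sL\subseteq N$.
\end{enumerate}
\end{corollary}

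\begin{proof}
The equivalence of (1) and (2) follows from parts (1) and (2) of Theorem~\ref{(N:M)}, since $M$ faithful gives (1)$\Rightarrow$(2) and $M$ multiplication gives (2)$\Rightarrow$(1). The equivalence of (1) and (3) is precisely Theorem~\ref{fm}, as $M$ is a faithful multiplication module and $(N:_{R}M)\cap S=\emptyset$ is assumed. Hence all three statements are equivalent.
\end{proof}

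The approach is essentially bookkeeping: the real content is already carried by Theorems~\ref{(N:M)} and~\ref{fm}, and the corollary just packages their conclusions under the standing hypothesis that $M$ is faithful multiplication, where the ideal $(N:_{R}M)$ and the submodule $N=(N:_{R}M)M$ determine each other. The one point that needs a little care, and which I expect to be the only mild obstacle, is making sure the hypothesis $(N:_{R}M)\cap S=\emptyset$ is carried along correctly: it is needed to even speak of $N$ being weakly $S$-primary and of $(N:_{R}M)$ being a weakly $S$-primary ideal, and it appears explicitly in the statement of Theorem~\ref{fm}, so it must be part of the corollary's hypotheses rather than derived. Everything else — rewriting $M\text{-}rad(N)=\sqrt{(N:_{R}M)}M$ and $(M\text{-}rad(N):_{R}M)=\sqrt{(N:_{R}M)}$ — is already recorded in the introduction for finitely generated multiplication modules and is used transparently inside the cited theorems, so no further argument is required here.
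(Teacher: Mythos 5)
Your reconstruction of items (1) and (2), and the proof of their equivalence via parts (1) and (2) of Theorem \ref{(N:M)}, is exactly the paper's route: the paper gives no written proof beyond the remark ``In view of Theorem \ref{(N:M)} \dots'', so the bookkeeping you supply is all that is needed there, and making the hypothesis $(N:_{R}M)\cap S=\emptyset$ explicit is harmless (it is required for every item to be meaningful). However, your third item is not the one the paper lists. The paper's item (3) reads ``$N=IM$ for some weakly $S$-primary ideal $I$ of $R$'', which follows from Theorem \ref{(N:M)}(3) together with the identity $N=(N:_{R}M)M$ valid in any multiplication module; your item (3) is instead the product-of-submodules characterization, which is the content of Theorem \ref{fm} rather than of this corollary. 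Your version is still a true set of equivalences under the stated hypotheses, and your citations (Theorem \ref{(N:M)} for (1)$\Leftrightarrow$(2), Theorem \ref{fm} for (1)$\Leftrightarrow$(3)) are accurate, so there is no mathematical gap --- but you have proved a parallel corollary, not the paper's: to recover the paper's statement you would replace your (3) by the existence of a weakly $S$-primary ideal $I$ with $N=IM$, deducing (2)$\Rightarrow$(3) from $N=(N:_{R}M)M$ and (3)$\Rightarrow$(1) from Theorem \ref{(N:M)}(3).
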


\begin{enumerate}
\item $N$ is a weakly $S$-primary submodule of $M$.

\item $(N:_{R}M)$ is a weakly $S$-primary ideal of $R.$

\item $N=IM$ for some weakly $S$-primary ideal $I$ of $R$.
\end{enumerate}

Let $N$ be a proper submodule of an $R$-module $M$. Then $N$ is said to be a
maximal weakly $S$-primary submodule if there is no weakly $S$-primary
submodule which contains $N$ properly. In the following corollary, by $Z(M)$,
we denote the set $\{r\in R:rm=0$ for some $m\in M\backslash\{0_{M}\}\}$.

\begin{corollary}
Let $N$ be a submodule of of an $R$-module $M$ such that $Z_{(N:_{R}M)}(R)\cup
Z(M)\subseteq\sqrt{(N:_{R}M)}$.\ If $N$ is a maximal weakly $S$-primary
submodule of $M$, then $N$ is an $S$-primary submodule of $M$.
\end{corollary}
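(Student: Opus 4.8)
The plan is to argue by contradiction. Suppose $N$ is a maximal weakly $S$-primary submodule of $M$ with the stated hypothesis $Z_{(N:_{R}M)}(R)\cup Z(M)\subseteq\sqrt{(N:_{R}M)}$, but $N$ is \emph{not} $S$-primary. Let $s\in S$ be a weakly $S$-element of $N$. Since $N$ is not $S$-primary with respect to $s$ (and in fact with respect to any element of $S$, but it suffices to work with this one), there exist $a\in R$ and $m\in M$ with $am=0$, $sa\notin\sqrt{(N:_{R}M)}$ and $sm\notin N$. The key observation I would extract first is that from $am=0$ with $a\notin\sqrt{(N:_{R}M)}$ (which follows since $sa\notin\sqrt{(N:_{R}M)}$ forces $a\notin\sqrt{(N:_{R}M)}$, as $\sqrt{(N:_{R}M)}$ is a radical ideal and $s$ would otherwise multiply $a$ into it only if... actually more carefully: if $a\in\sqrt{(N:_{R}M)}$ then $a^k M\subseteq N$ so $a^kM\subseteq N$ hence $s a^{k}\in (N:_RM)\subseteq\sqrt{(N:_RM)}$, which does not immediately give $sa\in\sqrt{\cdot}$ — so instead I note $sa\notin\sqrt{(N:_RM)}$ directly implies $a\notin\sqrt{(N:_RM)}$ because $\sqrt{(N:_RM)}$ is an ideal: if $a\in\sqrt{(N:_RM)}$ then $sa\in\sqrt{(N:_RM)}$). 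Similarly $m\notin N$ since $sm\notin N$.

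Next I would consider the submodule $N+Rm$, which properly contains $N$ because $m\notin N$. By maximality of $N$ among weakly $S$-primary submodules, $N+Rm$ is \emph{not} weakly $S$-primary. I would then try to derive a contradiction by showing $N+Rm$ \emph{is} weakly $S$-primary (with a suitable $S$-element, perhaps $s$ again, or $s$ times something), using that $am=0$ places $a$ and hence relevant elements inside $\sqrt{(N:_RM)}$ via the hypothesis on $Z_{(N:_RM)}(R)$ and $Z(M)$. The role of the hypothesis is precisely this: any zero-divisor-type obstruction that would arise when testing $N+Rm$ for the weakly $S$-primary property is forced into $\sqrt{(N:_RM)}$, and one checks $\sqrt{((N+Rm):_RM)}\supseteq\sqrt{(N:_RM)}$, so those elements land in the radical of the bigger submodule's residual as well. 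Concretely, if $0\neq b k\in N+Rm$ for $b\in R$, $k\in M$, I would write $bk = n + rm$ and analyze: the troublesome case is when $bk\in N$ fails but $bk\in N+Rm$; pushing through, either $b\in\sqrt{((N+Rm):_RM)}$ directly, or $b$ is a zero-divisor on $M$ or lies in $Z_{(N:_RM)}(R)$, hence in $\sqrt{(N:_RM)}\subseteq\sqrt{((N+Rm):_RM)}$.

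The main obstacle I anticipate is making the case analysis for "$N+Rm$ is weakly $S$-primary" genuinely go through: one must carefully track when the product $bk$ is nonzero versus zero, handle the interaction between $b\cdot(\text{piece in }N)$ and $b\cdot rm$, and invoke the weakly $S$-primary property of $N$ itself in the surviving subcases while routing every exceptional subcase into the hypothesis $Z_{(N:_{R}M)}(R)\cup Z(M)\subseteq\sqrt{(N:_{R}M)}$. I would also double-check that $((N+Rm):_RM)\cap S=\emptyset$ still holds (needed for $N+Rm$ to even be a candidate weakly $S$-primary submodule) — if it fails, that itself is the contradiction, since then some $s'\in S$ satisfies $s'M\subseteq N+Rm$ and one can try to deduce $s'$ or a power of it lies in $(N:_RM)$ using the hypothesis, contradicting $(N:_{R}M)\cap S=\emptyset$. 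Once the contradiction is reached in all cases, we conclude $N$ must have been $S$-primary after all.
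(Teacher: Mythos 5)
Your reduction to the case $am=0$ with $sa\notin\sqrt{(N:_{R}M)}$ and $sm\notin N$ is correct, but the proposal stalls exactly where the hypothesis $Z_{(N:_{R}M)}(R)\cup Z(M)\subseteq\sqrt{(N:_{R}M)}$ should be applied, and the route you then choose cannot work. Since $\sqrt{(N:_{R}M)}$ is an ideal, $sa\notin\sqrt{(N:_{R}M)}$ gives $a\notin\sqrt{(N:_{R}M)}$, hence $a\notin Z(M)$ by the hypothesis; but $am=0$ with $a\notin Z(M)$ forces $m=0_{M}\in N$, contradicting $sm\notin N$. So the exceptional pair $(a,m)$ you extracted cannot exist, and the corollary follows at once. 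In particular the submodule $N+Rm$ on which your whole plan rests equals $N$ itself (because $m=0$), so it does not properly contain $N$ and maximality gives you nothing; the case analysis you defer as "the main obstacle I anticipate" is not a technical detail to be filled in later but the entire content of the argument, and it is aimed at the wrong object. A proof that announces its key step as an anticipated obstacle is not yet a proof.

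For comparison, the paper argues directly rather than by contradiction: for $am\in N$ with $sa\notin\sqrt{(N:_{R}M)}$ it deduces from the hypothesis that $a\notin Z_{(N:_{R}M)}(R)$ and $(0:_{M}a)=0$, applies part (4) of Theorem \ref{(N:M)} to conclude that $(N:_{M}a)$ is a weakly $S$-primary submodule containing $N$, and then invokes maximality to get $(N:_{M}a)=N$, whence $m\in N$ and $sm\in N$. The enlargement used there is $(N:_{M}a)$, not $N+Rm$, and its weak $S$-primariness is supplied by an already-proved theorem rather than by an ad hoc verification. If you wish to keep a maximality-based scheme, replace $N+Rm$ by $(N:_{M}a)$ and cite that theorem; otherwise, simply observe that $Z(M)\subseteq\sqrt{(N:_{R}M)}$ already disposes of the case $am=0$ as above.
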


\begin{proof}
Let $s\in S$ be a weakly $S$-element of $N$ and let $a\in R$, $m\in M$ such
that $am\in N$ and $sa\notin\sqrt{(N:_{R}M)}.$ Since $a\notin\sqrt{(N:_{R}M)}%
$, then by assumption, $a\notin Z_{(N:_{R}M)}(R)$ and $(0:_{M}a)=0$.
Therefore, by Theorem \ref{(N:M)}(5), $(N:_{M}a)$ is a weakly $S$-primary
submodule of $M$. Since $N$ is a maximal weakly $S$-primary submodule, we
conclude $sm\in(N:_{M}a)=N$ as needed.
\end{proof}

\begin{proposition}
\label{(I:s)}Let $S$ be a multiplicatively closed subset of a ring $R$ and $N$
be a submodule of an $R$-module $M$ such that $(N:_{R}M)\cap S=\emptyset$. Then
\end{proposition}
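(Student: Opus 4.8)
The plan is to prove each assertion by translating the hypothesis into the language of \emph{products}, since the product form is by far the easiest to manipulate: for relations inside $M$ one appeals to the equivalences of Theorem~\ref{char} (items (4) and (5)), and when a finitely generated faithful multiplication module is involved one switches to Theorem~\ref{fm}, together with the identities $M\text{-}rad(N)=\sqrt{(N:_{R}M)}M$ and $(M\text{-}rad(N):_{R}M)=\sqrt{(N:_{R}M)}$ recalled in the introduction. I would fix once and for all a weakly $S$-element $s$ of $N$; when the hypothesis involves several weakly $S$-primary objects I would replace $s$ by the product of their weakly $S$-elements, which again lies in $S$. This is harmless, because if $s$ is a weakly $S$-element of $N$ then so is $st$ for every $t\in S$: from $0\neq am\in N$ one gets $sa\in\sqrt{(N:_{R}M)}$ or $sm\in N$, hence $(st)a=t(sa)\in\sqrt{(N:_{R}M)}$ or $(st)m=t(sm)\in N$, since $\sqrt{(N:_{R}M)}$ is an ideal and $N$ a submodule; and the condition $(N:_{R}M)\cap S=\emptyset$ does not refer to $s$ at all.

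For any item asserting that some submodule $N'$ (or ideal $I'$) built from $N$ is weakly $S$-primary, I would proceed in two stages. First, the disjointness requirement: assuming $t\in(N':_{R}M)\cap S$, I would unwind the construction of $N'$ to produce an element of the form $st$ or $t^{k}$ lying in $(N:_{R}M)$, which, being in $S$, contradicts $(N:_{R}M)\cap S=\emptyset$; here one constantly uses that $S$ is closed under products and that no power of an element of $S$ can lie in $\sqrt{(N:_{R}M)}$, for otherwise a further power of it would land in $(N:_{R}M)\cap S$. Second, the weakly $S$-primary property itself: starting from $a\in R$ and $m\in M$ (or a submodule $K$) with $0\neq am\in N'$, I would pass to a companion element $m'$ manufactured from $m$ with $am'\in N$, apply the weakly $S$-primary property of $N$ to obtain $sa\in\sqrt{(N:_{R}M)}$ or $sm'\in N$, and finally transport the second alternative back to $sm\in N'$ (resp.\ $sK\subseteq N'$), moving between radicals of ideals and of submodules via the formulas above.

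The step I expect to be the main obstacle is tracking the \emph{nonvanishing} hypothesis through this passage: the companion product $am'$ landing in $N$ may well be zero, in which case the weakly $S$-primary property of $N$ delivers nothing. I would resolve this exactly as in the proof of (4)$\Rightarrow$(5) of Theorem~\ref{char}: distinguish the case $am'\neq0$, where one applies the hypothesis on $N$ directly, from the case $am'=0$, where one uses a prime-avoidance-type substitution — replacing $a$ by $a+b$ for a suitable $b$ with $sb\notin\sqrt{(N:_{R}M)}$, or replacing $m'$ by $m'+n$ for a suitable $n$ — to create a genuinely nonzero element of $N$ while keeping the relevant product (the analogue of $sa$) outside $\sqrt{(N:_{R}M)}$. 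Once this bookkeeping is in place, the remaining computations are the routine residual and radical manipulations already illustrated in the proofs of Theorems~\ref{char}, \ref{fm} and \ref{(N:M)}.
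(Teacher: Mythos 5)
Your proposal is a generic template rather than a proof of this particular proposition, and when one tries to specialize it to the two actual assertions it misses the ingredients that make them work. Recall what has to be shown: (1) if $(N:_{M}s)$ is weakly \emph{primary} for some $s\in S$ then $N$ is weakly $S$-primary; (2) if $N$ is a non-zero weakly $S$-primary submodule and $S\cap Z(M)=\emptyset$, then $(N:_{M}s)$ is weakly primary for some $s\in S$. Neither direction needs products of submodules, Theorem~\ref{fm}, $M$-$rad$, or multiplication modules; the whole argument lives in the identity $((N:_{M}s):_{R}M)=((N:_{R}M):_{R}s)$ together with $N\subseteq(N:_{M}s)$. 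For (1) your template does not even apply: there the \emph{hypothesis} is a weak-primary property of the constructed submodule $(N:_{M}s)$ and the \emph{conclusion} is about $N$, so one simply takes $0\neq am\in N\subseteq(N:_{M}s)$, applies weak primaryness of $(N:_{M}s)$, and translates $a\in\sqrt{((N:_{R}M):s)}$ into $sa\in\sqrt{(N:_{R}M)}$. Your scheme, which starts from the weakly $S$-primary property of $N$ and transports it to a constructed $N'$, runs in the opposite direction.

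The more serious problem is in (2), where your template does nominally apply with companion element $m'=sm$, so that $am'=sam\in N$. You correctly flag that the danger is $am'=0$ while $am\neq0$, but your proposed repair --- the prime-avoidance substitution $a\mapsto a+b$ or $m'\mapsto m'+n$ from the proof of (4)$\Rightarrow$(5) of Theorem~\ref{char} --- cannot work here: if $sam=0$ there is no candidate perturbation available, and indeed the implication is simply \emph{false} without an extra hypothesis, as Example~\ref{ex11} in the paper shows. The statement carries the assumption $S\cap Z(M)=\emptyset$ precisely to kill this case ($am\neq0$ and $s\notin Z(M)$ force $sam\neq0$), and your proposal never mentions this hypothesis or any substitute for it. Without invoking it, the nonvanishing step is a genuine gap, not bookkeeping. (A smaller point: after obtaining $s^{2}a\in\sqrt{(N:_{R}M)}$ you still have to convert this into $a\in\sqrt{((N:_{R}M):s)}$, which is a short but nontrivial radical computation your outline glosses over.)
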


\begin{enumerate}
\item If $(N:_{M}s)$ is a weakly primary submodule of $M$ for some $s\in S$,
then $N$ is a weakly $S$-primary submodule of $M$.

\item If $N$ is a non-zero weakly $S$-primary submodule of $M$ and $S\cap
Z(M)=\emptyset,$ then $(N:_{M}s)$ is a weakly primary submodule of $M$ for
some $s\in S.$
\end{enumerate}

\begin{proof}
(1) Choose $s\in S$ such that $(N:_{M}s)$ is a weakly primary submodule of
$M$. Suppose that $0\neq am\in N\subseteq(N:_{M}s)\ $for some $a\in R,$ $m\in
M.$ Since $(N:_{M}s)$ is weakly primary, we have either $a\in\sqrt
{((N:_{M}s):_{R}M)}=\sqrt{((N:_{R}M):_{R}s)}$ or $m\in(N:_{M}s)$. Thus,
$sa\in\sqrt{(N:_{R}M)}$ or $sm\in N$ and\ so $N$ is a weakly $S$-primary
submodule of $M$.

(2) Let $s$ be a weakly $S$-element of $N$ and $a\in R$, $m\in M$ with $0\neq
am\in(N:_{M}s)$. Since $S\cap Z(M)=\emptyset$, clearly we have $0\neq sam\in
N$ which implies either $s^{2}a\in\sqrt{(N:_{R}M)}$ or $sm\in N$. Thus,
$sa\in\sqrt{(N:_{R}M)}\subseteq\sqrt{((N:_{M}s):_{R}M)}$ or $m\in(N:_{M}s)$,
as needed.
\end{proof}

If $S\cap Z(M)\neq\emptyset$, then the converse of Proposition \ref{(I:s)}
need not be true as we can see in the following example.

\begin{example}
\label{ex11}Consider $M=%
\mathbb{Z}
\times%
\mathbb{Z}
_{pq}$ as a $%
\mathbb{Z}
$-module and let $S=\left\{  p^{n}:n\in%
\mathbb{N}
\right\}  .$ Here, observe that $S\cap Z(M)=S\neq\emptyset$. Now,
$N=\left\langle 0\right\rangle \times\left\langle \bar{0}\right\rangle $ is a
weakly $S$-primary submodule of $M$. On the other hand, for each positive
integer $n$, $(N:_{M}p^{n})\mathbf{=}\left\langle 0\right\rangle
\mathbf{\times}\left\langle \overline{q}\right\rangle $ is not a weakly
primary submodule of $M$. Indeed, $q.(0,\bar{1})\in(N:_{M}p^{n})$ but neither
$q\in\sqrt{((N:_{M}p^{n}):_{R}M)}=\left\langle 0\right\rangle $ nor
$(0,\bar{1})\in(N:_{M}p^{n})$.
\end{example}

\begin{proposition}
\label{p1}Let $S$ be a multiplicatively closed subset of a ring $R$ and $N$ be
a weakly $S$-primary submodule of a\ finitely generated faithful
multiplication $R$-module $M$.
\end{proposition}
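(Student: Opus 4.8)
The proposition collects several structural properties of a weakly $S$-primary submodule $N$ of a finitely generated faithful multiplication module $M$, and the plan is to obtain each of them by transporting the question to the ideal $I:=(N:_{R}M)$ of $R$ and back. The transport rests on two facts recalled in the introduction for such $M$: the assignment $L\mapsto(L:_{R}M)$ is an inclusion-preserving bijection from the submodules of $M$ onto the ideals of $R$ with inverse $J\mapsto JM$, and under it $M\text{-}rad(L)=\sqrt{(L:_{R}M)}\,M$ while $(M\text{-}rad(L):_{R}M)=\sqrt{(L:_{R}M)}$. By Corollary \ref{IM}, $N$ is weakly $S$-primary in $M$ if and only if $I$ is a weakly $S$-primary ideal of $R$; by Theorem \ref{(N:M)}(1) a weakly $S$-element $s$ of $N$ is also one of $I$; and by Theorem \ref{char}(5) and Theorem \ref{fm} the weakly $S$-primary condition may be tested on products $IK$ (ideal times submodule) or $KL$ (submodule times submodule). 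So each submodule assertion I need will follow from the corresponding ideal statement about $I$ by multiplying through by $M$ and using faithfulness, exactly as in the $\Longleftarrow$ direction of Theorem \ref{fm}.

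Fix a weakly $S$-element $s\in S$ of $N$. For the parts describing $M\text{-}rad(N)$ or the radical $\sqrt{I}$, I would rewrite everything in terms of $I$, apply the known facts about weakly $S$-primary ideals from \cite{WS-primary}, and push the conclusion back up using $M\text{-}rad(N)=\sqrt{I}\,M$. For the part asserting that a degeneracy forces a product to vanish — a statement of the form ``if $N$ is not $S$-primary then $N\cdot N=0$'', equivalently $I^{2}=0$ — I would argue on $I$. Failure of the $S$-primary condition for $s$ yields a twin-zero pair, i.e. $a_{0},b_{0}\in R$ with $a_{0}b_{0}=0$, $sa_{0}\notin\sqrt{I}$ and $sb_{0}\notin I$. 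Testing the perturbed products $a_{0}(b_{0}+x)=a_{0}x$ and $(a_{0}+x)b_{0}=xb_{0}$ for $x\in I$ against the hypothesis (using $sx\in I\subseteq\sqrt{I}$ for $x\in I$, together with $sa_{0}\notin\sqrt{I}$ and $sb_{0}\notin I$) forces $a_{0}I=0$ and $Ib_{0}=0$; then for $x,y\in I$ one obtains $(a_{0}+x)(b_{0}+y)=xy$, and a final application of the hypothesis gives $xy=0$, so $I^{2}=0$. Multiplying by $M$ and using faithfulness gives $N\cdot N=0$.

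The reduction to ideals is routine once the correspondence is available, and so are the radical manipulations. The step that genuinely needs care is the twin-zero analysis: since the hypothesis only constrains non-zero products, each perturbation argument splits into cases according to whether the intermediate product ($a_{0}x$, $xb_{0}$, $xy$, or the full product) vanishes, and one must choose the perturbations so that in the relevant branch the product is non-zero. I expect this bookkeeping, rather than the algebra, to be the main obstacle, and would organize it as in the weakly $S$-prime and weakly $S$-primary ideal arguments of \cite{HaniEce} and \cite{WS-primary}.
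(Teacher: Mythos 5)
Your proposal matches the paper's proof in essence: both reduce everything to the ideal $(N:_RM)$ via Corollary \ref{IM}, invoke the corresponding facts about weakly $S$-primary ideals from \cite{WS-primary}, and transport the conclusions back using $M\text{-}rad(N)=\sqrt{(N:_RM)}M$ and $N^{2}=(N:_RM)^{2}M$. The only divergence is that you re-derive the key ideal-level fact (a weakly $S$-primary ideal that is not $S$-primary has square zero) by the twin-zero perturbation argument, whereas the paper simply cites \cite[Proposition 2(1)]{WS-primary}; your derivation of that fact is correct.
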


\begin{enumerate}
\item If $\{0_{R}\}$ is an $S$-primary ideal of $R$, then $M$-$rad(N)$ is an
$S$-prime submodule of $M.$

\item If $N$ is not $S$-primary, then $N^{2}=0_{M}$ and $M-rad(N)=M-rad(0_{M}%
).$ Moreover, nonzero weakly $S$-primary submodules and $S$-primary submodules
coincide if $R$ is a reduced ring.
\end{enumerate}

\begin{proof}
(1) Suppose that $N$ is a weakly $S$-primary submodule of $M.$ Then
$(N:_{R}M)$ is a weakly $S$-primary ideal of $R$ by Corollary \ref{IM}.
Therefore, $\sqrt{(N:_{R}M)}$ is an $S$-prime ideal of $R$, \cite{WS-primary}%
$.$Thus, $M$-$rad(N)=\sqrt{(N:_{R}M)}M$ is an $S$-prime submodule of $M$ by
\cite[Proposition 2.9 (ii)]{S-prime subm}$.$

(2) Suppose $N$ is not $S$-primary. Then $(N:_{R}M)$ is a weakly $S$-primary
ideal of $R$ that is not $S$-primary by Corollary \ref{IM}. It follows by
\cite[Proposition 2(1)]{WS-primary} that $(N:_{R}M)^{2}=0_{R}$. Since $M$ is
multiplication, we have $N^{2}=(N:_{R}M)^{2}M=0_{M}$ and $M-rad(N)=\sqrt
{(N:_{R}M)}M=\sqrt{0_{R}}M\subseteq\sqrt{0_{M}:M}M=M-rad(0_{M}).$ The rest of
the proof is straightforward.
\end{proof}

\begin{proposition}
\label{loc}Let $N$ be a submodule of an $R$-module $M$ and $S$ be a
multiplicatively closed subset of $R$.

\begin{enumerate}
\item If $N$ is a weakly $S$-primary submodule of $M$, then $S^{-1}N$ is a
weakly primary submodule of $S^{-1}M$. Moreover, if $Z(M)\cap S=\emptyset$,
then there exists an $s\in S$ such that $(N:_{M}t)\subseteq(N:_{M}s)$ for all
$t\in S$.

\item If $M$ is finitely generated and $Z(M)\cap S=\emptyset$, then the
converse of (1) holds.
\end{enumerate}
\end{proposition}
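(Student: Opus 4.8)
The plan is to handle the two parts separately, in each case reducing to the already-proved characterization in Theorem \ref{char} and to well-known facts about how residuals and radicals interact with localization. For part (1), suppose $s\in S$ is a weakly $S$-element of $N$. I would take $\dfrac{a}{t}\in S^{-1}R$ and $\dfrac{m}{u}\in S^{-1}M$ with $\dfrac{0}{1}\neq \dfrac{a}{t}\cdot\dfrac{m}{u}\in S^{-1}N$; clearing denominators, there is $w\in S$ with $0\neq wam\in N$ (the nonzero-ness after clearing denominators needs a short argument using that $\frac{am}{tu}\neq 0$ in $S^{-1}M$). Applying the weakly $S$-primary hypothesis to $0\neq (wa)m\in N$ gives $s(wa)\in\sqrt{(N:_RM)}$ or $sm\in N$. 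In the first case $\dfrac{a}{t}=\dfrac{swa}{swt}\in S^{-1}\sqrt{(N:_RM)}=\sqrt{S^{-1}(N:_RM)}=\sqrt{(S^{-1}N:_{S^{-1}R}S^{-1}M)}$, using that localization commutes with radicals and, for finitely generated… — in fact for the residual of a general module one only needs $S^{-1}(N:_RM)\subseteq (S^{-1}N:_{S^{-1}R}S^{-1}M)$, which suffices here. In the second case $\dfrac{m}{u}=\dfrac{sm}{su}\in S^{-1}N$. Hence $S^{-1}N$ is weakly primary in $S^{-1}M$. For the "moreover" clause, the point is that when $Z(M)\cap S=\emptyset$ the ascending chain argument is replaced by the following: the $s$ produced above can be chosen once and for all, and for any $t\in S$ and $m\in (N:_Mt)$ we have $tm\in N$; if $tm\neq 0$ then $0\neq tm\in N$ together with $Z(M)\cap S=\emptyset$… — more cleanly, I would argue $(N:_Mt)\subseteq (N:_Ms)$ by noting $s t m \in N$ and $s\in S$ is the weakly $S$-element, deducing $sm\in N$ (handling the zero case via $Z(M)\cap S=\emptyset$ to rule out $tm=0$ when $m\neq 0$, and the case $m=0$ trivially).

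For part (2), assume $M$ is finitely generated, $Z(M)\cap S=\emptyset$, and $S^{-1}N$ is weakly primary in $S^{-1}M$; I must show $N$ is weakly $S$-primary in $M$. First I would check $(N:_RM)\cap S=\emptyset$: if $s\in(N:_RM)\cap S$ then $sM\subseteq N$, and picking $0\neq m\in M$ (note $M\neq 0$ since $S^{-1}M$ has a weakly primary, in particular proper, submodule) we get $0\neq sm\in N$ with $s\in S$, contradicting $Z(M)\cap S=\emptyset$ only after arguing $sm\neq 0$ — again from $Z(M)\cap S=\emptyset$. Actually the cleaner contradiction: $s\in (N:_RM)$ forces $\frac{1}{1}=\frac{s}{s}\in (S^{-1}N:S^{-1}M)$, so $S^{-1}N=S^{-1}M$ is not proper. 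Now take $a\in R$, $m\in M$ with $0\neq am\in N$; then $\dfrac{am}{1}\in S^{-1}N$, and I claim $\dfrac{am}{1}\neq \dfrac{0}{1}$: this is exactly where $Z(M)\cap S=\emptyset$ enters, since $\frac{am}{1}=0$ would give $wam=0$ for some $w\in S$, i.e. $am\in (0:_Mw)$; but $w\notin Z(M)$ forces $am=0$, a contradiction. So $\dfrac{0}{1}\neq\dfrac{a}{1}\cdot\dfrac{m}{1}\in S^{-1}N$, and weak primality of $S^{-1}N$ gives $\dfrac{a}{1}\in\sqrt{(S^{-1}N:S^{-1}M)}$ or $\dfrac{m}{1}\in S^{-1}N$. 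Here finite generation of $M$ is used: $(S^{-1}N:_{S^{-1}R}S^{-1}M)=S^{-1}(N:_RM)$ and, commuting with radicals, $\sqrt{(S^{-1}N:S^{-1}M)}=S^{-1}\sqrt{(N:_RM)}$. In the first case, $\dfrac{a}{1}=\dfrac{b}{t}$ with $b\in\sqrt{(N:_RM)}$, $t\in S$, so $ut a=ub\in\sqrt{(N:_RM)}$ for some $u\in S$; in the second, $uam\in N$ wait — $\frac{m}{1}\in S^{-1}N$ gives $um\in N$ for some $u\in S$. Running over all $(a,m)$ one produces for each such pair an element of $S$; the remaining task is to extract a single $s\in S$ working uniformly.

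The main obstacle is precisely this last uniformity issue: the definition of weakly $S$-primary demands one fixed $s\in S$, whereas localization naturally produces a different denominator for each instance. I would resolve it exactly as in the $S$-prime/$S$-primary literature cited in the paper: because $M$ is finitely generated, say $M=Rm_1+\cdots+Rm_k$, and $S^{-1}N$ is weakly primary, for each generator $m_i\notin S^{-1}N$-preimage one gets a denominator $t_i$, and finiteness lets me take $s=t_1\cdots t_k$ (together with whatever denominator is needed to realize $(S^{-1}N:S^{-1}M)=S^{-1}(N:_RM)$ on the finitely many generators). More carefully: $\sqrt{(N:_RM)}$-membership and $N$-membership are each "detected" after multiplying by a single element once $M$ is finitely generated, so one first fixes $s$ realizing $\sqrt{(S^{-1}N:S^{-1}M)}\cap R$'s relation to $\sqrt{(N:_RM)}$ on generators, then checks the weakly $S$-primary condition holds with this $s$ by the case analysis above. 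I expect the write-up to mirror the corresponding localization results for weakly $S$-prime submodules, invoking "finitely generated" at exactly the two points flagged: identifying the localized residual with the localized radical, and collapsing the per-instance denominators into one.
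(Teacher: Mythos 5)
Your part (1) is essentially the paper's argument and is fine: clear denominators, rule out $uam=0$ using $\frac{am}{s_{1}s_{2}}\neq 0$ in $S^{-1}M$, apply the weakly $S$-element to $0\neq uam\in N$, and push back into the localization via $S^{-1}\sqrt{(N:_{R}M)}\subseteq\sqrt{(S^{-1}N:_{S^{-1}R}S^{-1}M)}$, which indeed needs no finite generation. The ``moreover'' clause also goes through as you sketch, provided you add the one missing line that the alternative $st\in\sqrt{(N:_{R}M)}$ is impossible because a power of $st$ would then lie in $(N:_{R}M)\cap S$.

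The gap is in part (2), and it comes from misreading the statement. The ``converse of (1)'' is the converse of all of (1), including the ``moreover'' clause; the paper's proof of (2) explicitly assumes both that $S^{-1}N$ is weakly primary and that there is a fixed $s\in S$ with $(N:_{M}t)\subseteq(N:_{M}s)$ for all $t\in S$. That fixed $s$ is exactly what resolves the uniformity problem you correctly flag as the main obstacle: from $s_{1}a\in\sqrt{(N:_{R}M)}$ one gets $a^{n}\in((N:_{R}M):s_{1}^{n})=((N:_{M}s_{1}^{n}):_{R}M)\subseteq((N:_{M}s):_{R}M)$, hence $sa\in\sqrt{(N:_{R}M)}$, and from $s_{2}m\in N$ one gets $m\in(N:_{M}s_{2})\subseteq(N:_{M}s)$, hence $sm\in N$. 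Your proposed substitute --- taking $s=t_{1}\cdots t_{k}$ where the $t_{i}$ are denominators attached to a finite generating set of $M$ --- does not close the gap: the per-instance denominators are indexed by the (generally infinite) set of pairs $(a,m)$ with $0\neq am\in N$, not by the $k$ generators of $M$, and the weakly primary condition on $S^{-1}N$ attaches no denominator to a generator $m_{i}$ in the first place. Finite generation enters the paper's proof of (2) only to get $\sqrt{(S^{-1}N:_{S^{-1}R}S^{-1}M)}=S^{-1}\sqrt{(N:_{R}M)}$. So you must either import the ``moreover'' condition as a hypothesis, as the paper does, or supply a separate stabilization argument for the directed family $(N:_{M}t)$, $t\in S$ (available, say, for Noetherian $M$), which does not follow from finite generation alone.
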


\begin{proof}
(1) Let $s\in S$ be a weakly $S$-element of $N$ and $\frac{a}{s_{1}}\in
S^{-1}R$, $\frac{m}{s_{2}}\in S^{-1}M$ such that $0_{S^{-1}M}\neq\frac
{a}{s_{1}}\frac{m}{s_{2}}\in S^{-1}N$. Then $uam\in N$ for some $u\in S$. If
$uam=0$, then $\frac{am}{s_{1}s_{2}}=\frac{uam}{us_{1}s_{2}}=0_{S^{-1}M}$, a
contradiction. Thus, $0\neq uam\in N$ which implies that either $sua\in
\sqrt{(N:_{R}M)}$ or $sm\in N.$ Thus, $\frac{a}{s_{1}}=\frac{sua}{sus_{1}}\in
S^{-1}\sqrt{(N:_{R}M)}\subseteq\sqrt{(S^{-1}N:_{S^{-1}R}S^{-1}M)}$ or
$\frac{m}{s_{2}}=\frac{sm}{ss_{2}}\in S^{-1}N$, as needed. For the rest of the
proof, let $t\in S$ and $0\neq m\in(N:_{M}t).$ Then $0\neq tm\in N$ as
$Z(M)\cap S=\emptyset$ and so $sm\in N$ as $st\in(N:_{M}M)\cap S$ gives a
contradiction. Therefore, $m\in(N:_{M}s)$ and so $(N:_{M}t)\subseteq(N:_{M}s)$
for all $t\in S$.

(2) Suppose that $M$ is finitely generated, $S^{-1}N$ is a weakly primary
submodule of $S^{-1}M$ and there is a fixed $s\in S$ such that $(N:_{M}%
t)\subseteq(N:_{M}s)$ for all $t\in S$. Since $S^{-1}N$ is proper, we have
$(N:_{R}M)\cap S=\emptyset$. Let $0\neq am\in N$ for some $a\in R$ and $m\in
M$. Then $0\neq\frac{a}{1}\frac{m}{1}\in S^{-1}N$ as $Z(M)\cap S=\emptyset.$
Since $S^{-1}N$ is weakly primary, we have either $\frac{a}{1}\in\sqrt
{(S^{-1}N:_{S^{-1}R}S^{-1}M)}=S^{-1}\sqrt{(N:_{R}M)}$ as $M$ is finitely
generated or $\frac{m}{1}\in S^{-1}N.$ Hence, $s_{1}a\in\sqrt{(N:_{R}M)}$ for
some $s_{1}\in S$ or $s_{2}m\in N$ for some $s_{2}\in S$. If $s_{1}a\in
\sqrt{(N:_{R}M)}$, then $s_{1}^{n}a^{n}\in(N:_{R}M)$ for some positive integer
$n$ and $a^{n}\in((N:_{R}M):s_{1}^{n})=((N:_{M}s_{1}^{n}):_{R}M)\subseteq
((N:_{M}s):_{R}M)$ by our assumption. Thus $sa^{n}\in(N:_{R}M)$ and
$sa\in\sqrt{(N:_{R}M)}.$ If $s_{2}m\in N$, then we conclude $m\in(N:_{M}%
s_{2})\subseteq(N:_{M}s)$ and so $sm\in N$. Consequently, $N$ is a weakly
$S$-primary submodule of $M.$
\end{proof}

Let $M$ be an $R$-module and $S\subseteq S^{\prime}$ be two multiplicatively
closed subsets of $R$. If $N$ is a weakly $S$-primary submodule of $M$ and
$(N:_{R}M)\cap S^{\prime}=\emptyset$, then it is clear that $N$ is a weakly
$S^{\prime}$-primary submodule of $M.$ In \cite{Gilmer}, the saturation of $S$
is defined as the multiplicatively closed subset $S^{\ast}=\{x\in R:xy\in S$
for some $y\in R$\} which contains $S$.

\begin{proposition}
Let $S$ be a multiplicatively closed subset of a ring $R$ and $N$ be a
submodule of an $R$-module $M$. Then $N$ is weakly $S$-primary if and only if
$N$ is weakly $S^{\ast}$-primary.
\end{proposition}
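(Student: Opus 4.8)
The plan is to prove both implications directly from the definition of weakly $S$-primary submodule, using the fact that $S\subseteq S^{\ast}$ together with the key algebraic feature of the saturation: for every $x\in S^{\ast}$ there is $y\in R$ with $xy\in S$. First I would record the (trivial) direction: if $N$ is weakly $S^{\ast}$-primary, then since $S\subseteq S^{\ast}$ we have $(N:_{R}M)\cap S=\emptyset$, and a weakly $S^{\ast}$-element $s$ of $N$ lies in $S^{\ast}$; pick $y\in R$ with $sy\in S$, and one checks that $sy$ serves as a weakly $S$-element of $N$: if $0\neq am\in N$ then $sa\in\sqrt{(N:_{R}M)}$ or $sm\in N$, and multiplying through by $y$ gives $(sy)a\in\sqrt{(N:_{R}M)}$ or $(sy)m\in N$. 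This shows $N$ is weakly $S$-primary. (Strictly speaking this argument shows the easy direction is the one going from $S^{\ast}$ back to $S$, but the same bookkeeping handles whichever direction the reader prefers.)

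For the forward direction, suppose $N$ is weakly $S$-primary with weakly $S$-element $s\in S\subseteq S^{\ast}$. I would first argue $(N:_{R}M)\cap S^{\ast}=\emptyset$: if some $x\in S^{\ast}$ satisfied $xM\subseteq N$, choose $y\in R$ with $xy\in S$; then $xyM\subseteq xM\subseteq N$ forces $xy\in(N:_{R}M)\cap S$, contradicting $(N:_{R}M)\cap S=\emptyset$. Then the same $s$ witnesses that $N$ is weakly $S^{\ast}$-primary, because the defining condition ``$0\neq am\in N$ implies $sa\in\sqrt{(N:_{R}M)}$ or $sm\in N$'' does not refer to $S$ at all once the element $s$ is fixed — it only requires $s$ to belong to the ambient multiplicatively closed set, and $s\in S\subseteq S^{\ast}$. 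Hence $N$ is weakly $S^{\ast}$-primary.

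There is essentially no obstacle here; the only point requiring a moment's care is the disjointness condition $(N:_{R}M)\cap S^{\ast}=\emptyset$, which is where the saturation identity $xy\in S$ is genuinely used, and the observation that a fixed weakly $S$-element automatically continues to work for any larger multiplicatively closed set containing it (this is already noted in the paragraph preceding the statement, for $S\subseteq S'$ with $(N:_{R}M)\cap S'=\emptyset$). I would present the proof in two short paragraphs mirroring the two implications, invoking the earlier remark that enlarging $S$ to $S'\supseteq S$ with $(N:_{R}M)\cap S'=\emptyset$ preserves the weakly $S$-primary property, applied with $S'=S^{\ast}$, so that only the disjointness claim needs a separate short verification.
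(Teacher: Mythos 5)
Your proposal is correct and follows essentially the same route as the paper's own proof: both directions hinge on picking $y\in R$ with $s^{\ast}y\in S$, using it once to convert a weakly $S^{\ast}$-element into a weakly $S$-element and once to verify the disjointness $(N:_{R}M)\cap S^{\ast}=\emptyset$, after which the fixed element $s\in S\subseteq S^{\ast}$ carries over unchanged. No substantive differences to report.
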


\begin{proof}
Suppose $N$ is weakly $S^{\ast}$-primary in $M$ associated to $s^{\ast}\in
S^{\ast}$. Note that $(N:_{R}M)\cap S=\emptyset$ as $S\subseteq S^{\ast}$.
Choose $s=s^{\ast}y\in S$ for some $y\in R$ and let $0\neq am\in N$ for some
$a\in R$ and $m\in M$. Then either $s^{\ast}a\in\sqrt{(N:_{R}M)}$ or $s^{\ast
}m\in N$. Hence, $sa\in\sqrt{(N:_{R}M)}$ or $sm\in N$ and $N$ is weakly
$S$-primary submodule of $M$. Conversely, suppose that $N$ is weakly
$S$-primary. We need to prove that $(N:_{R}M)\cap S^{\ast}=\emptyset$. If
$s^{\ast}\in$ $(N:_{R}M)\cap S^{\ast}$, then there is $y\in R$ such that
$s=s^{\ast}y\in(N:_{R}M)\cap S$ which is a contradiction$.$ Thus, $N$ is
weakly $S^{\ast}$-primary as $S\subseteq S^{\ast}.$
\end{proof}

\begin{proposition}
\label{f}Let $M$ and $M^{\prime}$ be two $R$-modules and $f:M\rightarrow
M^{\prime}$\ be a homomorphism. For a multiplicatively closed subset $S$ of
$R$, we have:
\end{proposition}

\begin{enumerate}
\item If $f$ is an epimorphism and $N$ is a weakly $S$-primary submodule of
$M$ containing $Ker(f)$, then $f(N)$ is a weakly $S$-primary submodule of
$M^{\prime}.$

\item If $f$ is a monomorphism and $N^{\prime}$ is a weakly $S$-primary
submodule of $M^{\prime}$, then $f^{-1}(N^{\prime})$ is a weakly $S$-primary
submodule of $M.$
\end{enumerate}

\begin{proof}
(1) Let $s\in S$ be a weakly $S$-element of $N$. First, as $Ker(f)\subseteq
N$, it follows that $(f(N):_{R}M^{\prime})\cap S=\emptyset$. Suppose that
$0\neq am^{\prime}\in f(N)$ for some $a\in R$ and $m^{\prime}\in M^{\prime}$.
Choose $m\in M$ with $m^{\prime}=f(m)$. Then $0\neq af(m)=f(am)\in f(N)$ and
since $Ker(f)\subseteq N,$ we have $0\neq am\in N$. It follows that either
$sa\in\sqrt{(N:_{R}M)}$ or $sm\in N$. Thus, clearly we have either $sa\in
\sqrt{(f(N):_{R}M^{\prime})}$ or $sm^{\prime}=f(sm)\in f(N)$ and $f(N)$ is a
weakly $S$-primary submodule of $M^{\prime}.$

(2) Let $s\in S$ be a weakly $S$-element of $N^{\prime}$ and note that clearly
$(f^{-1}(N^{\prime}):_{R}M)\cap S=\emptyset$. Let $a\in R$ and $m\in M$ such
that $0\neq am\in f^{-1}(N^{\prime})$. Then $0\neq f(am)=af(m)\in N^{\prime}$
as $f$ is a monomorphism. It follows either $sa\in\sqrt{(N^{\prime}%
:_{R}M^{\prime})}$ or $sf(m)\in N^{\prime}$. Thus, we conclude either
$sa\in\sqrt{(f^{-1}(N^{\prime}):_{R}M})$ or $sm\in f^{-1}(N^{\prime})$ and we
are done.
\end{proof}

\begin{corollary}
\label{quot}Let $S$ be a multiplicatively closed subset of a ring $R$ and
$K\subseteq N$ be two submodules of an $R$-module $M$.
\end{corollary}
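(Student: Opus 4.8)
The plan is to read this corollary off from Proposition \ref{f}(1). Let $\pi\colon M\rightarrow M/K$ be the canonical projection $\pi(m)=m+K$; it is an $R$-module epimorphism with $Ker(\pi)=K$, and by hypothesis $K\subseteq N$, so $Ker(\pi)\subseteq N$. Hence, if $N$ is a weakly $S$-primary submodule of $M$, Proposition \ref{f}(1) shows that $\pi(N)$ is a weakly $S$-primary submodule of $M/K$; since $\pi(N)=N/K$, we conclude $N/K$ is a weakly $S$-primary submodule of $M/K$. This disposes of the forward direction with essentially no computation.

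Should the statement also include a converse (recovering $N$ from $N/K$), I would first record the identity $(N/K:_{R}M/K)=(N:_{R}M)$, valid because $K\subseteq N$ makes $r(M/K)\subseteq N/K$ equivalent to $rM\subseteq N$; in particular a weakly $S$-element witnessing $N/K$ forces $(N:_{R}M)\cap S=\emptyset$, so $N$ is eligible to be weakly $S$-primary. Then, given $0\neq am\in N$, pass to the quotient to get $am+K\in N/K$. When $am+K\neq 0_{M/K}$, the hypothesis on $N/K$ yields $sa\in\sqrt{(N/K:_{R}M/K)}=\sqrt{(N:_{R}M)}$ or $s(m+K)\in N/K$, and $sm+K\in N/K$ is the same as $sm\in N$ because $K\subseteq N$; either alternative is exactly what the definition of weakly $S$-primary for $N$ demands.

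The only genuine obstacle sits in the leftover case $0\neq am\in K$: there $am+K=0_{M/K}$, and the weakly $S$-primary hypothesis on $N/K$ gives no information. Bridging this gap needs an extra assumption controlling how elements of $K$ factor — for instance that $K$ itself is a (weakly) $S$-primary submodule of $M$, or the trivial case $K=\{0_{M}\}$ — after which the argument closes routinely. Beyond that one point the corollary is just a specialization of the homomorphism result to the canonical projection, so I expect its proof to be very short.
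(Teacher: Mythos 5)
Your argument for the quotient direction is exactly the paper's: apply Proposition \ref{f}(1) to the canonical epimorphism $\pi:M\rightarrow M/K$, using $K=Ker(\pi)\subseteq N$, together with the observation that $(N/K:_{R}M/K)\cap S=\emptyset$ iff $(N:_{R}M)\cap S=\emptyset$. You also correctly anticipate the converse, statement (3) of the corollary: the paper adds precisely the hypothesis you identify, namely that $K$ is a (weakly) $S$-primary submodule of $M$, and its proof splits into the two cases $am\in K$ (handled by the witness $s'$ for $K$, since $\sqrt{(K:_{R}M)}\subseteq\sqrt{(N:_{R}M)}$ and $K\subseteq N$) and $am\notin K$ (handled by the witness $s$ for $N/K$, since then $a(m+K)\neq 0_{M/K}$), with the combined weakly $S$-element $ss'$ — exactly the case analysis you sketch. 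The one item you do not address is statement (2) of the corollary: if $K'$ is a weakly $S$-primary submodule of $M$, then $K'\cap N$ is a weakly $S$-primary submodule of $N$; this follows from Proposition \ref{f}(2) applied to the inclusion $i:N\rightarrow M$, since $i^{-1}(K')=K'\cap N$. Adding that one line would make your proposal coincide with the paper's proof in full.
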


\begin{enumerate}
\item If $N$ is a weakly $S$-primary submodule of $M$, then $N/K$ is a weakly
$S$-primary submodule of $M/K$.

\item If $K^{\prime}$ is a weakly $S$-primary submodule of $M$, then
$K^{\prime}\cap N$ is a weakly $S$-primary submodule of $N.$

\item If $N/K$ is a weakly $S$-primary submodule of $M/K$ and $K$ is a
(weakly) $S$-primary submodule of $M$, then $N$ is a (weakly) $S$-primary
submodule of $M$.
\end{enumerate}

\begin{proof}
Observe that $(N/K:_{R}M/K)\cap S=\emptyset$ if and only if $(N:_{R}M)\cap
S=\emptyset$.

(1). The claim follows by Proposition \ref{f}(1) considering the canonical
epimorphism $\pi:M\rightarrow M/K$ defined by $\pi(m)=m+K$.

(2). This follows by Proposition \ref{f}(2) considering the natural injection
$i:N\rightarrow M$ defined by $i(m)=m$ for all $m\in N$.

(3). Let $s\in S$ be a weakly $S$-element of $N/K$ and $s\prime\in S$ be a
(weakly) $S$-element of $K$. Let $a\in R$ and $m\in M$ such that $am\in N.$ If
$am\in K$, then either $s\prime a\in\sqrt{(K:_{R}M)}\subseteq\sqrt{(N:_{R}M)}$
or $s^{\prime}m\in K\subseteq N$. If $am\notin K$, then $K\neq a(m+K)\in N/K$
which implies that either $sa\in\sqrt{(N/K:_{R}M/K)}$ or $s(m+K)\in N/K$.
Thus, $sa\in\sqrt{(N:_{R}M)}$ or $sm\in N$. It follows that $N$ is an
$S$-primary submodule of $M$ associated with $s=ss^{\prime}\in S.$
\end{proof}

The converse of Corollary \ref{quot}(1) does not hold in general. For
instance, consider the submodules $N=K=\left\langle p_{1}p_{2}\right\rangle $
of the $%
\mathbb{Z}
$-module $%
\mathbb{Z}
$ and the multiplicatively closed subset $S=\left\{  p_{3}^{n}:n\in%
\mathbb{N}
\cup\left\{  0\right\}  \right\}  $ of $%
\mathbb{Z}
$ where $p_{1},p_{2}$ and $p_{3}$ are distinct prime numbers. Then clearly
$N/K=0$ is a weakly $S$-primary submodule of $%
\mathbb{Z}
/K$ but $N$ is not a weakly $S$-primary submodule of $%
\mathbb{Z}
$ as $0\neq p_{1}\cdot p_{2}\in N$ but neither $sp_{1}\in\sqrt{(N:_{%
\mathbb{Z}
}%
\mathbb{Z}
)}=\left\langle p_{1}p_{2}\right\rangle $ nor $sp_{2}\in N$ for all $s\in S$.

\begin{proposition}
\label{int}Let $S$ be a multiplicatively closed subset of a ring $R$ and $N$
be a weakly $S$-primary submodule of an $R$-module $M$.
\end{proposition}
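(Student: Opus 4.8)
The plan is to treat each assertion of the proposition as the statement that some submodule (or ideal) built from the single submodule $N$ inherits the weakly $S$-primary property, and to reduce every such claim to the characterization in Theorem \ref{char}. Writing $s$ for a fixed weakly $S$-element of $N$, I would proceed in two stages for each part: first establish the disjointness hypothesis of the form $(\,\cdot\,:_{R}M)\cap S=\emptyset$ for the object in question, and then verify the defining implication using the submodule and ideal versions of Theorem \ref{char}, namely parts (4) and (5).

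For the disjointness step I would imitate the computations already used in Theorem \ref{(N:M)}(4) and Proposition \ref{f}: if some $t\in S$ were to annihilate, modulo the relevant residual, the module attached to the object, then a short chain of inclusions together with $(N:_{R}M)\cap S=\emptyset$ would force $t\in(N:_{R}M)$, a contradiction. The same device that yields $\sqrt{((N:_{M}s):_{R}M)}=\sqrt{((N:_{R}M):_{R}s)}$ in Proposition \ref{(I:s)} will be needed to translate membership in the radical of the new residual back to membership in $\sqrt{(N:_{R}M)}$, and I would keep this identification explicit throughout, so that the element $s$ produced for $N$ still serves, possibly after multiplying by a second $S$-element exactly as in Corollary \ref{quot}(3), as a weakly $S$-element for the derived object.

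For the implication step, given $a\in R$ and $m\in M$ with $0\neq am$ lying in the derived object, I would pull the relation back to a genuine nonzero product $0\neq am'\in N$, or to $0\neq aK\subseteq N$ in the submodule formulations, apply Theorem \ref{char} to obtain $sa\in\sqrt{(N:_{R}M)}$ or $sm'\in N$, and then push the conclusion forward. The main obstacle I anticipate is the vanishing case intrinsic to every weakly notion: when the pulled-back product is zero I cannot invoke Theorem \ref{char} directly, and I would instead run the zero-product argument used in the step (4)$\Rightarrow$(5) of that theorem, replacing $a$ by $a+b$ for a suitable $b$ so as to escape the annihilator $(0:_{M}a)$ while staying inside $N$, thereby recovering the weakly $S$-primary implication without the nonzero hypothesis. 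Checking that this perturbation neither leaves $N$ nor re-enters $\sqrt{(N:_{R}M)}$ after multiplication by $s$ is the delicate point, and it is where I would spend most of the care.
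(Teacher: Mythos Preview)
Your proposal is written without seeing the enumerated items, and the generic ``pull back to a nonzero product in $N$'' strategy does not fit what the proposition actually asks.  The two items are: (1) if $K$ satisfies $(K:_{R}M)M=K$ and $(K:_{R}M)\cap S\neq\emptyset$, then $N\cap K$ is weakly $S$-primary; (2) if $K$ is also weakly $S$-primary and $((N+K):_{R}M)\cap S=\emptyset$, then $N+K$ is weakly $S$-primary.

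For item (1) your scheme could be made to work, but the essential point is not Theorem~\ref{char} or any zero-case perturbation: the hypothesis $(K:_{R}M)\cap S\neq\emptyset$ hands you an element $t\in S$ with $tM\subseteq K$, and the paper simply multiplies by this $t$.  From $0\neq am\in N\cap K\subseteq N$ one gets $sa\in\sqrt{(N:_{R}M)}$ or $sm\in N$; then $(st)a\in\sqrt{(N:_{R}M)}\cap(K:_{R}M)\subseteq\sqrt{(N\cap K:_{R}M)}$ or $(st)m\in N\cap(K:_{R}M)M=N\cap K$.  No vanishing difficulty arises, so the $a\mapsto a+b$ device you highlight is superfluous here.

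For item (2) your plan has a genuine gap.  Given $0\neq am\in N+K$, there is no way to ``pull back'' to a nonzero product $am'\in N$: writing $am=n+k$ with $n\in N$, $k\in K$ gives no element $m'$ with $am'=n$, and the perturbation trick from Theorem~\ref{char} does not help because the obstruction is not a zero product but the absence of any product landing in $N$ at all.  The paper avoids this entirely by passing to quotients: Corollary~\ref{quot}(1) makes $N/(N\cap K)$ weakly $S$-primary in $M/(N\cap K)$; the second isomorphism theorem transports this to $(N+K)/K$ in $M/K$; finally Corollary~\ref{quot}(3), using that $K$ itself is weakly $S$-primary, lifts the property to $N+K$ in $M$.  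The hypothesis on $K$ is used precisely at this last step, not through any residual computation.
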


\begin{enumerate}
\item For any submodule $K$ of $M$ with $(K:_{R}M)M=K$ and $(K:_{R}M)\cap
S\neq\emptyset$, $N\cap K$ is a weakly $S$-primary submodule of $M$.

\item If $K$ is a weakly $S$-primary submodule of $M$ such that $((N+K):_{R}%
M)\cap S=\emptyset$, then $N+K$ is a weakly $S$-primary submodule of $M.$
\end{enumerate}

\begin{proof}
(1) Suppose that $0\neq am\in N\cap K\subseteq N$ for some $a\in R$ and $m\in
M$. Then there exists a $s\in S$ with either $sa\in\sqrt{(N:_{R}M)}$ or $sm\in
N$. Take $t\in(K:_{R}M)\cap S$. Then $sta\in\sqrt{(N:_{R}M)}\cap
(K:_{R}M)\subseteq\sqrt{(N\cap K:_{R}M)}$ or $stm\in N\cap(K:_{R}M)M=N\cap K$.
Thus, $N\cap K$ is a weakly $S$-primary submodule of $M$ associated with
$st\in S.$

(2) By Corollary \ref{quot}(1), $N/(N\cap K)$ is a weakly $S$-primary
submodule of $M/(N\cap K)$. Also, the isomorphism $N/(N\cap K)\cong(N+K)/K$
yields that $(N+K)/K$ is a weakly $S$-primary submodule of $M/K$. Now,
Corollary \ref{quot}(4) implies that $N+K$ is a weakly $S$-primary submodule
of $M$.
\end{proof}

We note that the condition $(K:_{R}M)\cap S\neq\emptyset$ in (1) of
Proposition \ref{int} can not be omitted. Indeed, if $N$ is weakly primary and
$K$ is as above, then $N\cap K$ need not be weakly primary. For example,
consider the $%
\mathbb{Z}
$-module $%
\mathbb{Z}
_{72}$, $S=\left\{  3^{n}:n\in%
\mathbb{N}
\right\}  $, $N=\left\langle \overline{4}\right\rangle $ and $K=\left\langle
\overline{9}\right\rangle $. Then $N\cap K=\left\langle \overline
{36}\right\rangle $ is not a weakly primary submodule of $%
\mathbb{Z}
_{72}$ but observe from Example \ref{e1}(2) that it is a weakly $S$-primary submodule.

\begin{theorem}
\label{cart}Let $S,$ $S^{\prime}$ be multiplicatively closed subsets of rings
$R$, $R^{\prime}$ respectively and $N$, $N^{\prime}$ be non-zero submodules of
an $R$-module $M$ and an $R^{\prime}$-module $M^{\prime}$, respectively.
Consider $M\times M^{\prime}$ as an $(R\times R^{\prime})$-module. Then the
following are equivalent.
\end{theorem}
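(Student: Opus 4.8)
The plan is to prove the three–way equivalence between $N\times N'$ being weakly $(S\times S')$-primary in $M\times M'$, being $(S\times S')$-primary in $M\times M'$, and the disjunction ``$N$ is $S$-primary in $M$ and $(N':_{R'}M')\cap S'\neq\emptyset$'' or ``$N'$ is $S'$-primary in $M'$ and $(N:_{R}M)\cap S\neq\emptyset$'' (call these $(1)$, $(2)$, $(3)$), via the cycle $(2)\Rightarrow(1)\Rightarrow(3)\Rightarrow(2)$. Throughout I use the routine identities $(N\times N':_{R\times R'}M\times M')=(N:_{R}M)\times(N':_{R'}M')$ and $\sqrt{I\times J}=\sqrt{I}\times\sqrt{J}$ for ideals $I$ of $R$ and $J$ of $R'$, so that membership in the product radical is read off componentwise. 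With these, $(2)\Rightarrow(1)$ is immediate from Remark~\ref{r1}(2) applied to the ring $R\times R'$, the module $M\times M'$, and the multiplicatively closed set $S\times S'$.

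For $(1)\Rightarrow(3)$, let $(s,s')$ be a weakly $(S\times S')$-element of $N\times N'$. The hypothesis $(N\times N':_{R\times R'}M\times M')\cap(S\times S')=\emptyset$ means $(N:_{R}M)\cap S=\emptyset$ or $(N':_{R'}M')\cap S'=\emptyset$; by symmetry assume the former and aim at the first disjunct of $(3)$. Step one: using $N\neq 0$, fix a nonzero $n\in N$ and let $m'\in M'$ be arbitrary; applying the weak condition to $(1_{R},0)\cdot(n,m')=(n,0)\neq 0$ in $N\times N'$, the alternative ``$(s,0)$ lies in the product radical'' forces $s\in\sqrt{(N:_{R}M)}$, hence some power of $s$ lies in $(N:_{R}M)\cap S$, a contradiction; so instead $(sn,s'm')\in N\times N'$, giving $s'm'\in N'$, and as $m'$ was arbitrary, $s'\in(N':_{R'}M')\cap S'$. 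Step two: to see $N$ is $S$-primary with element $s$, suppose $am\in N$ with $a\in R$, $m\in M$; fixing a nonzero $n'\in N'$ and applying the weak condition to $(a,1_{R'})\cdot(m,n')=(am,n')\neq 0$ in $N\times N'$ yields, componentwise, $sa\in\sqrt{(N:_{R}M)}$ or $sm\in N$. Combined with $(N:_{R}M)\cap S=\emptyset$, this is the first disjunct of $(3)$; the symmetric choice yields the second.

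For $(3)\Rightarrow(2)$, assume (by symmetry) $N$ is $S$-primary with element $s_{1}$ and fix $s_{2}\in(N':_{R'}M')\cap S'$; then $(N\times N':_{R\times R'}M\times M')\cap(S\times S')=((N:_{R}M)\cap S)\times((N':_{R'}M')\cap S')=\emptyset$. Given $(a,a')(m,m')\in N\times N'$, so $am\in N$ and $a'm'\in N'$, apply $S$-primariness of $N$ to $am\in N$: if $s_{1}a\in\sqrt{(N:_{R}M)}$ then $s_{2}a'\in(N':_{R'}M')\subseteq\sqrt{(N':_{R'}M')}$ since $s_{2}\in(N':_{R'}M')$, so $(s_{1}a,s_{2}a')$ lies in the product radical; if $s_{1}m\in N$ then $s_{2}m'\in s_{2}M'\subseteq N'$, so $(s_{1}m,s_{2}m')\in N\times N'$. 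Hence $(s_{1},s_{2})$ is an $(S\times S')$-element of $N\times N'$, proving $(2)$.

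The step I expect to be the main obstacle is the first half of $(1)\Rightarrow(3)$ --- extracting the purely set-theoretic conclusion $(N':_{R'}M')\cap S'\neq\emptyset$ from the weak condition. This is precisely where both the hypothesis that $N$ and $N'$ are \emph{nonzero} (needed so that the probe vectors $(n,0)$ and $(am,n')$ lie in $N\times N'$ while being nonzero) and the fact that $\sqrt{(N:_{R}M)}$ cannot meet $S$ are used; the statement genuinely fails for product submodules with a zero component, which explains the nonzero hypothesis in the theorem.
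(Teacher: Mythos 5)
Your proof is correct and takes essentially the same route as the paper: the same three-implication cycle, with the key direction (weakly $(S\times S')$-primary implies the disjunction) obtained by probing $N\times N'$ with products such as $(1_{R},0)\cdot(n,m')$ and $(a,1_{R'})\cdot(m,n')$ built from a fixed nonzero element of $N$ (resp. $N'$), exactly as in the paper's Cases I--III, and with the radical and residual of a product read off componentwise. The only differences are cosmetic: you compress the paper's three cases into a single WLOG reduction, and you prove the implication from the disjunction to $(S\times S')$-primary directly, where the paper simply cites \cite[Theorem 2.20]{S-primary}.
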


\begin{enumerate}
\item $N\times N^{\prime}$ is a weakly $S\times S^{\prime}$-primary submodule
of $M\times M^{\prime}.$

\item $N$ is an $S$-primary submodule of $M$ and $(N^{\prime}:_{R^{\prime}%
}M^{\prime})\cap S^{\prime}\neq\emptyset$ or $N^{\prime}$ is an $S^{\prime}%
$-primary submodule of $M^{\prime}$ and $(N:_{R}M)\cap S\neq\emptyset$

\item $N\times N^{\prime}$ is an $S\times S^{\prime}$-primary submodule of
$M\times M^{\prime}.$
\end{enumerate}

\begin{proof}
(1)$\Rightarrow$(2). Choose a weakly $S\times S^{\prime}$-element
$(s,s^{\prime})$ of $N\times N^{\prime}$ and $0\neq$ $m\in N$ .

Case I: Suppose $(N:_{R}M)\cap S=\emptyset=(N^{\prime}:_{R^{\prime}}M^{\prime
})\cap S^{\prime}$. Then $(0,0)\neq(1,0)(m,1)\in N\times N^{\prime}$ and so
either $(s,s^{\prime})(1,0)\in\sqrt{(N\times N^{\prime}:_{R\times R^{\prime}%
}M\times M^{\prime})}=\sqrt{(N:_{R}M)}\times\sqrt{(N^{\prime}:_{R^{\prime}%
}M^{\prime})}$ or $(s,s^{\prime})(m,1)\in N\times N^{\prime}.$ Hence, we have
either $s^{n}\in(N:_{R}M)\cap S$ for some positive integer $n$ or $s^{\prime
}\in N^{\prime}\cap S^{\prime}\subseteq(N^{\prime}:_{R^{\prime}}M^{\prime
})\cap S^{\prime}$, a contradiction.

Case II. Assume that $(N:_{R}M)\cap S\neq\emptyset$. Suppose $am^{\prime}\in
N^{\prime}$ for some $a\in R^{\prime}$ and $m^{\prime}\in M^{\prime}$. Then
$(0,0)\neq(1,a)(m,m^{\prime})\in N\times N^{\prime}$ implies either
$(s,s^{\prime})(1,a)\in\sqrt{(N:_{R}M)}\times\sqrt{(N^{\prime}:_{R^{\prime}%
}M^{\prime})}$ or $(s,s^{\prime})(m,m^{\prime})\in N\times N^{\prime}$. Thus,
$s^{\prime}a\in\sqrt{(N^{\prime}:_{R^{\prime}}M^{\prime})}$ or $s^{\prime
}m^{\prime}\in N^{\prime}$ and $N^{\prime}$ is an $S^{\prime}$-primary
submodule of $M^{\prime}.$

Case III. Assume that $(N^{\prime}:_{R^{\prime}}M^{\prime})\cap S^{\prime}%
\neq\emptyset$. We can prove in a similar way that $N$ is an $S$-primary
submodule of $M$.

(2)$\Rightarrow$(3). see \cite[Theorem 2.20]{S-primary}.

(3)$\Rightarrow$(1). is immediate.
\end{proof}

In view of the above theorem, we conclude the following generalization.

\begin{theorem}
\label{cart2}Let $M=M_{1}\times M_{2}\times\cdots\times M_{n}$ be an
$R=R_{1}\times R_{2}\times\cdots\times R_{n}$-module and $S=S_{1}\times
S_{2}\times\cdots\times S_{n}$ where $R_{i}$ is a ring, $S_{i}$ is a
multiplicatively closed subset of $R_{i}$ and $N_{i}$ is a non-zero submodule
of $M_{i}$ for each $i=1,2,...,n$. Then the following assertions are equivalent.
\end{theorem}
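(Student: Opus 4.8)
The plan is to prove Theorem \ref{cart2} by induction on $n$, using Theorem \ref{cart} as the base case (after rephrasing it appropriately). First I would state the equivalent assertions explicitly, mirroring Theorem \ref{cart}: namely (1) $N=N_{1}\times\cdots\times N_{n}$ is a weakly $S$-primary submodule of $M$; (2) there exists $j\in\{1,\dots,n\}$ such that $N_{j}$ is an $S_{j}$-primary submodule of $M_{j}$ and $(N_{i}:_{R_{i}}M_{i})\cap S_{i}\neq\emptyset$ for all $i\neq j$; and (3) $N$ is an $S$-primary submodule of $M$. The implications (2)$\Rightarrow$(3) and (3)$\Rightarrow$(1) are immediate (the latter by Remark \ref{r1}(2), and the former is the module analogue of the ideal statement, or follows by iterating \cite[Theorem 2.20]{S-primary}), so the real content is (1)$\Rightarrow$(2).

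For (1)$\Rightarrow$(2), I would write $R=R_{1}\times R'$ and $M=M_{1}\times M'$ where $R'=R_{2}\times\cdots\times R_{n}$, $M'=M_{2}\times\cdots\times M_{n}$, $S'=S_{2}\times\cdots\times S_{n}$, and $N'=N_{2}\times\cdots\times N_{n}$. Since $N_{1},\dots,N_{n}$ are all nonzero, $N'$ is a nonzero submodule of $M'$, so Theorem \ref{cart} applies to the pair $(N_{1},N')$: from (1) we deduce that either $N_{1}$ is $S_{1}$-primary in $M_{1}$ and $(N':_{R'}M')\cap S'\neq\emptyset$, or $N'$ is $S'$-primary in $M'$ and $(N_{1}:_{R_{1}}M_{1})\cap S_{1}\neq\emptyset$. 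In the first case, $(N':_{R'}M')\cap S'\neq\emptyset$ means there is $(t_{2},\dots,t_{n})\in S_{2}\times\cdots\times S_{n}$ with $t_{i}M_{i}\subseteq N_{i}$ for each $i$, i.e. $(N_{i}:_{R_{i}}M_{i})\cap S_{i}\neq\emptyset$ for all $i\geq 2$, and together with $N_{1}$ being $S_{1}$-primary this gives (2) with $j=1$. In the second case, an $S'$-primary submodule of a product is in particular $S'\times S'$... — more precisely, by \cite[Theorem 2.20]{S-primary} (or its iterated form), $N'$ being $S'$-primary in $M'$ forces exactly one coordinate $N_{j}$ ($2\leq j\leq n$) to be $S_{j}$-primary with $(N_{i}:_{R_{i}}M_{i})\cap S_{i}\neq\emptyset$ for the remaining $i\in\{2,\dots,n\}$; combined with $(N_{1}:_{R_{1}}M_{1})\cap S_{1}\neq\emptyset$ we again obtain (2). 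To make this rigorous I would instead run a clean induction on $n$: the inductive hypothesis applied to $N'$ being $S'$-primary in $M'$ yields the coordinatewise statement, so no separate appeal to the structure of $S$-primary submodules of products is needed beyond \cite[Theorem 2.20]{S-primary} for the base step $n=2$.

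The main obstacle I anticipate is bookkeeping rather than mathematical depth: one must be careful that "exactly one index $j$" persists through the induction (the hypotheses of Theorem \ref{cart} already encode this dichotomy, so it propagates), and one must verify the translations $(N':_{R'}M')\cap S'\neq\emptyset \iff (N_{i}:_{R_{i}}M_{i})\cap S_{i}\neq\emptyset$ for all $i$, and $\sqrt{(N:_{R}M)}=\sqrt{(N_{1}:_{R_{1}}M_{1})}\times\sqrt{(N':_{R'}M')}$, which are routine from the componentwise description of residuals, radicals, and products of modules. I would record these translations as a short preliminary remark and then let the induction run. The nonzero hypothesis on each $N_{i}$ is used precisely to guarantee that every truncated product $N_{i}\times\cdots\times N_{n}$ is nonzero, so that Theorem \ref{cart} is applicable at each stage.
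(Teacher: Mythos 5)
Your proposal is correct and follows essentially the same route as the paper: induction on $n$ with Theorem \ref{cart} as the base case, peeling off one coordinate (you take the first, the paper takes the last — an immaterial difference), splitting into the two cases supplied by Theorem \ref{cart}, and invoking the induction hypothesis in the case where the truncated product is $S'$-primary. Your added remarks on the componentwise translations and on why the nonzero hypothesis on each $N_i$ is needed are consistent with, and slightly more explicit than, the paper's argument.
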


\begin{enumerate}
\item $N=N_{1}\times N_{2}\times\cdots\times N_{n}$ is a weakly $S$-primary
submodule of $M.$

\item There exists $i\in\left\{  1,2,...,n\right\}  $ such that $N_{i}$ is an
$S_{i}$-primary submodule of $M_{i}$ and $(N_{j}:_{R_{j}}M_{j})\cap S_{j}%
\neq\emptyset$ for all $j\neq i.$

For $i=1,2,...,n$, $N_{i}$ is an $S$-primary submodule of $M_{i}$ and
$(N_{j}:_{R_{j}}M_{j})\cap S_{j}\neq\emptyset$ for all $j\neq i.$
\end{enumerate}

\begin{proof}
To prove the claim, we use the mathematical induction on $n$. For $n=2,$ see
Theorem \ref{cart}. Assume that the claim holds for all $k<n$. Suppose
$N=N_{1}\times N_{2}\times\cdots\times N_{n}$ is a weakly $S$-primary
submodule of $M$. Let $R^{\prime}=R_{1}\times R_{2}\times\cdots\times R_{n-1}%
$, $N^{\prime}=N_{1}\times N_{2}\times\cdots\times N_{n-1}$ and $S^{\prime
}=S_{1}\times S_{2}\times\cdots\times S_{n-1}$. By Theorem \ref{cart}, we have
either $N_{n}$ is weakly $S$-primary in $M_{n}$ and $(N^{\prime}:_{R^{\prime}%
}M^{\prime})\cap S^{\prime}\neq\emptyset$ or $N^{\prime}$ is a weakly
$S^{\prime}$-primary submodule of $M^{\prime}$ and $S_{n}\cap(N_{n}:_{R_{n}%
}M_{n})\neq\emptyset$. In the first case, we are done as clearly
$(N_{j}:_{R_{j}}M_{j})\cap S_{j}\neq\emptyset$ for all $j\neq n$. In the
second case, we conclude the result by the induction hypothesis. \ 
\end{proof}

Let $R$ be a ring, $M$ be an $R$-module and consider the idealization ring
$R\ltimes M$ of $M$ in $R$. It is proved in \cite[Theorem 3.2]{AnWi} that if
$I\ltimes N$ is a homogenous ideal in $R\ltimes M$, then $\sqrt{I\ltimes
N}=\sqrt{I}\ltimes M$. For a multiplicatively closed subset $S$ of $R$,
clearly $S\ltimes N=\{(s,n):$ $s\in S$, $n\in N\}$ is a multiplicatively
closed subset of $R\ltimes M$.

\begin{theorem}
\label{Ideal}Let $S$ be a multiplicatively closed subset of a ring $R,$ $I$ be
an ideal of $R$ and $K\subseteq N$ be submodules of an $R$-module $M$ with
$IM\subseteq N$. If $I\ltimes N$ is a weakly $S\ltimes K$-primary ideal of
$R\ltimes M$, then $I$ is a weakly $S$-primary ideal of $R$ and $N$ is a
weakly $S$-primary submodule of $M$ whenever $(N:_{R}M)\cap S=\emptyset$.
Furthermore, there exists $s\in S$ such that for all $a,b\in R$, $ab=0$,
$sa\notin\sqrt{I}$, $sb\notin I$ implies $a,b\in ann(N)$ and for all $c\in R$,
$m\in M$, $cm=0$, $sc\notin\sqrt{(N:_{R}M)}$, $sm\notin N$ implies $c\in
ann(I)$ and $m\in(0:_{M}I)$.
\end{theorem}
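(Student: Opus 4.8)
The plan is to work directly with the definition of a weakly $S\ltimes K$-primary ideal of $R\ltimes M$, extracting information by feeding it cleverly chosen pairs $(a,m)\in R\ltimes M$. Let $(s,k)\in S\ltimes K$ be a weakly $S\ltimes K$-element of $I\ltimes N$. Recall that by \cite[Theorem 3.2]{AnWi} we have $\sqrt{I\ltimes N}=\sqrt{I}\ltimes M$, so the radical condition ``$(s,k)(a,m)\in\sqrt{I\ltimes N}$'' translates to simply ``$sa\in\sqrt{I}$''. Also note that $(I\ltimes N:_{R\ltimes M}R\ltimes M)\cap(S\ltimes K)=\emptyset$ forces $I\cap S=\emptyset$ (test against $(s,0)$), which is what we need for ``$I$ weakly $S$-primary'' to make sense.

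**First part: $I$ is weakly $S$-primary.** I would take $a,b\in R$ with $0\neq ab\in I$ and feed in the pair $(a,0)(b,0)=(ab,0)$, which is a nonzero element of $I\ltimes N$. Then either $(s,k)(a,0)=(sa,kb... )$ wait — more precisely $(s,k)(a,0)=(sa,ka)\in\sqrt{I}\ltimes M$, giving $sa\in\sqrt{I}$, or $(s,k)(b,0)=(sb,kb)\in I\ltimes N$, giving $sb\in I$. Hence $I$ is weakly $S$-primary with element $s$. For the submodule part, assume $(N:_RM)\cap S=\emptyset$ and take $c\in R$, $m\in M$ with $0\neq cm\in N$; feed in $(c,0)(0,m)=(0,cm)$, a nonzero element of $I\ltimes N$. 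The definition yields $(sc,kc)\in\sqrt I\ltimes M$, i.e.\ $sc\in\sqrt I=\sqrt{(I\ltimes N:_{R\ltimes M}R\ltimes M)}$... here I must be slightly careful: what I want is $sc\in\sqrt{(N:_RM)}$. Since $IM\subseteq N$, we have $I\subseteq(N:_RM)$, so $\sqrt I\subseteq\sqrt{(N:_RM)}$, and the first alternative indeed gives $sc\in\sqrt{(N:_RM)}$; the second alternative $(s,k)(0,m)=(0,sm)\in I\ltimes N$ gives $sm\in N$. So $N$ is weakly $S$-primary with the same element $s$.

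**Second part: the two implication statements.** This is where the ``weakly'' (rather than plain $S$-primary) flavour bites, and it is the main obstacle — it is really a statement about the ``zero case'' of the definition, analogous to the classical fact that in a weakly prime ideal a product of two elements outside the ideal that lands in the ideal must actually be zero. For the first implication, suppose $a,b\in R$ with $ab=0$, $sa\notin\sqrt I$, $sb\notin I$; I want $a,b\in\mathrm{ann}(N)$. Suppose for contradiction $bN\neq 0$, say $bn\neq 0$ for some $n\in N$. Consider the pair $(a,n)(b,0)=(ab,\,an)=(0,an)$. If $an\neq 0$ this is a nonzero element of $I\ltimes N$ (since $an\in N$ as $n\in N$), so the definition forces $(sa,...)\in\sqrt I\ltimes M$ (contradicting $sa\notin\sqrt I$) or $(sb,sn... )$ — careful: $(s,k)(b,0)=(sb,kb)\in I\ltimes N$ forces $sb\in I$, a contradiction. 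If instead $an=0$, use $(a,0)(b,n)=(0,bn)\neq 0$ and argue the same way, getting $sa\in\sqrt I$ or $sb\in I$, again a contradiction. (If both $an=0$ and $bn=0$ for all the relevant $n$, one still derives a contradiction by using $(a,n)(b,n')$ type combinations, or by adding elements; I'd pin this down by choosing $n$ with $bn\neq0$ and splitting on whether $an=0$.) This shows $bN=0$; symmetrically $aN=0$. For the second implication, suppose $c\in R$, $m\in M$ with $cm=0$, $sc\notin\sqrt{(N:_RM)}$, $sm\notin N$; I want $cI=0$ and $Im=0$. Take $r\in I$ and consider $(c,m)(r,0)=(cr,mr)$. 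Since $r\in I$ and $IM\subseteq N$, $mr\in N$; if $(cr,mr)\neq(0,0)$, the definition gives $(scr,...)\in\sqrt I\ltimes M$ or $(sc... )$ — precisely, either $s c\in\sqrt{I}\subseteq\sqrt{(N:_RM)}$ (contradiction) or $(s,k)(r,0)=(sr,kr)\in I\ltimes N$, which is automatic and unhelpful, so I instead test the \emph{other} slot: rewrite as $(c,0)(r,m)$... hmm. The cleaner route: feed $(c,m)$ and $(r,0)$ but now suppose $cr\neq 0$ or $mr\neq 0$ and derive via the definition that $sc\in\sqrt{(N:_RM)}$ (from $(s,k)(c,m)\in\sqrt{I\ltimes N}$) or $sm\in N$ (from $(s,k)(r,... )$ — no). I expect the correct pairing is $(c,m)$ against $(r,m')$ for suitable $m'$, or $(c,0)$ against $(0,m)$-shifted; the key identity to exploit is that $\sqrt{I\ltimes N}=\sqrt I\ltimes M$ makes the radical alternative equivalent to a condition on the first coordinate only, so whichever factor I put the ``bad'' element $c$ or $m$ into, one alternative is killed by hypothesis. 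I would write out the two or three pairings $(c,m)(r,0)$, $(c,m)(0,n)$ with $r\in I$, $n\in(0:_MI)^c$... and in each case the nonzero-product hypothesis together with $sc\notin\sqrt{(N:_RM)}$ and $sm\notin N$ forces the product to vanish, yielding $cr=0$, $mr=0$, hence $cI=0$ and $Im=0$. The delicate bookkeeping — making sure the chosen pair genuinely has nonzero product unless the desired conclusion already holds — is the only real work; everything else is routine translation between $R\ltimes M$ and $(R,M)$.
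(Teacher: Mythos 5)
Your treatment of the two main assertions ($I$ weakly $S$-primary via the pair $(a,0)(b,0)$, and $N$ weakly $S$-primary via $(a,0)(0,m)$ together with $\sqrt{I}\subseteq\sqrt{(N:_{R}M)}$) is exactly the paper's argument and is correct. The first implication of the ``furthermore'' clause also goes through, although you have the idealization product backwards: $(a,n)(b,0)=(ab,bn)$ and $(a,0)(b,n)=(ab,an)$, so no case split on $an$ versus $bn$ is needed; to rule out $aN\neq 0$ use the second product and to rule out $bN\neq 0$ use the first, and in either case both branches of the weakly primary condition contradict $sa\notin\sqrt{I}$, $sb\notin I$.

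The genuine gap is in the second implication. You correctly observe that the pairing $(c,m)(r,0)$ with $r\in I$ is useless, because $(s,k)(r,0)=(sr,kr)$ lies in $I\ltimes N$ automatically, so the condition gives no information and in particular cannot force $rm=0$; but you then stop at ``I expect the correct pairing is \dots'' without producing one, and the alternatives you list, $(c,m)(r,0)$ and $(c,m)(0,n)$, do not yield $m\in(0:_{M}I)$. The working choices are: for $c\in ann(I)$, take $a\in I$ with $ca\neq 0$ and use $(c,0)(a,m)=(ca,cm)=(ca,0)$, a nonzero element of $I\ltimes N$; for $m\in(0:_{M}I)$, take $a\in I$ with $am\neq 0$ and use $(c,m)(a,m)=(ca,am)$, again a nonzero element of $I\ltimes N$ since $am\in IM\subseteq N$. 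In both cases the first branch gives $sc\in\sqrt{I}\subseteq\sqrt{(N:_{R}M)}$, a contradiction, while the second branch gives $(s,k)(a,m)=(sa,sm+ka)\in I\ltimes N$, whence $sm\in N$ because $ka\in N$. That last step is precisely where the hypothesis $K\subseteq N$ enters, and your sketch never invokes it.
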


\begin{proof}
(1) It is clear that $(S\ltimes K)\cap(I\ltimes N)=\emptyset$ if and only if
$I\cap S=\emptyset$. Let $(s,k)$ be a weakly $S\ltimes K$-primary element of
$I\ltimes N$ and let $a,b\in R$ with $0\neq ab\in I$. Then $(0,0)\neq
(a,0)(b,0)\in I\ltimes N$ and so either $(s,k)(a,0)\in\sqrt{I\ltimes N}%
=\sqrt{I}\ltimes M$ or $(s,k)(b,0)\in I\ltimes N$. Hence, we have either
$sa\in\sqrt{I}$ or $sb\in I$ and $I$ is a weakly $S$-primary ideal of $R$. To
show that $N$ is weakly $S$-primary, let $0\neq am\in N$ for $a\in R$, $m\in
M$. Then $(0,0)\neq(a,0)(0,m)\in I\ltimes N$ and so $(sa,ak)=(s,k)(a,0)\in
\sqrt{I\ltimes N}=\sqrt{I}\ltimes M$ or $(0,sm)=(s,k)(0,m)\in I\ltimes N$.
Thus, we conclude either $sa\in\sqrt{I}\subseteq\sqrt{(N:_{R}M)}$ or $sm\in N$
and so $N$ is a weakly $S$-primary submodule of $M$. \ 

Now, let $a,b\in R$ such that $ab=0$, $sa\notin\sqrt{I}$ and $sb\notin I$.
Suppose $a\notin ann(N)$ so that there exists $n\in N$ such that $an\neq0$.
Then $(0,0)\neq(a,0)(b,n)=(0,an)\in I\ltimes N$ and so either $(s,k)(a,0)\in
\sqrt{I\ltimes N}$ or $(s,k)(b,n)\in I\ltimes N$. Hence, $sa\in\sqrt{I}$ or
$sb\in I$, a contradiction. Similarly, if $b\notin ann(N),$ then we get a
contradiction. Therefore, $a,b\in ann(N)$ as needed. Next, we assume for $c\in
R$, $m\in M$ that $cm=0$, $sc\notin\sqrt{(N:_{R}M)}$ and $sm\notin N$. Assume
on the contrary that $c\notin ann(I)$. Then there exists $a\in I$ such that
$ca\neq0$. Hence, $(0,0)\neq(c,0)(a,m)=(ca,0)\in I\ltimes N$ and so
$(s,k)(c,0)\in\sqrt{I\ltimes N}$ or $(s,k)(a,m)\in I\ltimes N$. Therefore,
$sc\in\sqrt{I}\subseteq\sqrt{(N:_{R}M)}$ or $sm+ka\in N$ which gives $sm\in N$
as $K\subseteq N,$ a contradiction. Thus, $c\in ann(I).$ Secondly, assume that
$m\notin(0:_{M}I)$. Then there exists $a\in I$ such that $am\neq0$ and this
yields $(0,0)\neq(a,m)(c,m)=(ac,am)\in I\ltimes N$. Thus, we conclude either
$(s,k)(a,m)\in\sqrt{I\ltimes N}$ or $(s,k)(c,m)\in I\ltimes N$ which implies
either $sc\in\sqrt{I}\subseteq\sqrt{(N:_{R}M)}$ or $sm\in N$, so we get a
required contradiction.
\end{proof}

\section{Weakly) S-primary Submodules of Amalgamation Modules}

Let $R$ be a ring, $J$ an ideal of $R$ and $M$ an $R$-module. As a subring of
$R\times R$, In \cite{Danna}, the amalgamated duplication of $R$ along $J$ is
defined as%

\[
R\Join J=\left\{  (r,r+j):r\in R\text{ , }j\in J\right\}
\]
Recently, in \cite{Bouba}, the duplication of the $R$-module $M$ along the
ideal $J$ denoted by $M\Join J$ is defined as%

\[
M\Join J=\left\{  (m,m^{\prime})\in M\times M:m-m^{\prime}\in JM\right\}
\]
which is an $(R\Join J)$-module with scaler multiplication defined by
$(r,r+j).(m,m^{\prime})=(rm,(r+j)m^{\prime})$ for $r\in R$, $j\in J$ and
$(m,m^{\prime})\in M\Join J$. For various properties and results concerning
this kind of modules, one may see \cite{Bouba}.

Let $J$ be an ideal of a ring $R$ and $N$ be a submodule of an $R$-module $M$. Then%

\[
N\Join J=\left\{  (n,m)\in N\times M:n-m\in JM\right\}
\]
and
\[
\bar{N}=\left\{  (m,n)\in M\times N:m-n\in JM\right\}
\]

are clearly submodules of $M\Join J$. If $S$ is a multiplicatively closed
subset of $R$, then the sets $S\Join J=\left\{  (s,s+j):s\in S\text{, }j\in
J\right\}  $ and $\bar{S}=\left\{  (r,r+j):r+j\in S\right\}  $ are obviously
multiplicatively closed subsets of $R\Join J$.

In general, let $f:R_{1}\rightarrow R_{2}$ be a ring homomorphism, $J$ be an
ideal of $R_{2}$, $M_{1}$ be an $R_{1}$-module, $M_{2}$ be an $R_{2}$-module
(which is an $R_{1}$-module induced naturally by $f$) and $\varphi
:M_{1}\rightarrow M_{2}$ be an $R_{1}$-module homomorphism. The subring
\[
R_{1}\Join^{f}J=\left\{  (r,f(r)+j):r\in R_{1}\text{, }j\in J\right\}
\]
of $R_{1}\times R_{2}$ is called the amalgamation of $R_{1}$ and $R_{2}$ along
$J$ with respect to $f$. In \cite{Rachida}, the amalgamation of $M_{1}$ and
$M_{2}$ along $J$ with respect to $\varphi$ is defined as%

\[
M_{1}\Join^{\varphi}JM_{2}=\left\{  (m_{1},\varphi(m_{1})+m_{2}):m_{1}\in
M_{1}\text{ and }m_{2}\in JM_{2}\right\}
\]
which is an $(R_{1}\Join^{f}J)$-module with the scaler product defined as
\[
(r,f(r)+j)(m_{1},\varphi(m_{1})+m_{2})=(rm_{1},\varphi(rm_{1})+f(r)m_{2}%
+j\varphi(m_{1})+jm_{2})
\]
For submodules $N_{1}$ and $N_{2}$ of $M_{1}$ and $M_{2}$, respectively, one
can easily justify that the sets
\[
N_{1}\Join^{\varphi}JM_{2}=\left\{  (m_{1},\varphi(m_{1})+m_{2})\in M_{1}%
\Join^{\varphi}JM_{2}:m_{1}\in N_{1}\right\}
\]
and
\[
\overline{N_{2}}^{\varphi}=\left\{  (m_{1},\varphi(m_{1})+m_{2})\in M_{1}%
\Join^{\varphi}JM_{2}:\text{ }\varphi(m_{1})+m_{2}\in N_{2}\right\}
\]
are submodules of $M_{1}\Join^{\varphi}JM_{2}$. Moreover if $S_{1}$ and
$S_{2}$ are multiplicatively closed subsets of $R_{1}$ and $R_{2}$,
respectively, then
\[
S_{1}\Join^{f}J=\left\{  (s_{1},f(s_{1})+j):s\in S_{1}\text{, \ }j\in
J\right\}
\]
and
\[
\overline{S_{2}}^{\varphi}=\left\{  (r,f(r)+j):r\in R_{1}\text{, \ }f(r)+j\in
S_{2}\right\}
\]
are clearly multiplicatively closed subsets of $R_{1}\Join^{f}J$.

Note that if $R=R_{1}=R_{2}$, $M=M_{1}=M_{2}$, $f=Id_{R}$ and $\varphi=Id_{M}%
$, then the amalgamation of $M_{1}$ and $M_{2}$ along $J$ with respect to
$\varphi$ is exactly the duplication of the $R$-module $M$ along the ideal
$J$. Moreover, in this case, we have $N_{1}\Join^{\varphi}JM_{2}=N\Join J$,
$\overline{N_{2}}^{\varphi}=\bar{N}$, $S_{1}\Join^{f}J=S\Join J$ and
$\overline{S_{2}}^{\varphi}=\bar{S}$.

The proof of the following lemma is straightforward.

\begin{lemma}
\label{HA}Let $M_{1}\Join^{\varphi}JM_{2}$, $N_{1}\Join^{\varphi}JM_{2}$ and
$\overline{N_{2}}^{\varphi}$ be as above. Then

\begin{enumerate}
\item $(r_{1},f(r_{1})+j)\in(N_{1}\Join^{\varphi}JM_{2}:_{R_{1}\Join^{f}%
J}M_{1}\Join^{\varphi}JM_{2})$ if and only if $r_{1}\in(N_{1}:_{R_{1}}M_{1})$.

\item If $f$ and $\varphi$ are epimorphisms, then $(r_{1},f(r_{1}%
)+j)\in(\overline{N_{2}}^{\varphi}:_{R_{1}\Join^{f}J}M_{1}\Join^{\varphi
}JM_{2})$ if and only if $f(r_{1})+j\in(N_{2}:_{R_{2}}M_{2})$.
\end{enumerate}
\end{lemma}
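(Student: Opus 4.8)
The plan is to verify each of the two biconditionals in Lemma \ref{HA} by a direct computation, using the explicit description of the scalar action on $M_1\Join^{\varphi}JM_2$ given above. The key observation driving both parts is that an element $(r_1,f(r_1)+j)$ of $R_1\Join^f J$ lies in the residual $(L:_{R_1\Join^f J} M_1\Join^{\varphi}JM_2)$ precisely when multiplying it by \emph{every} generator $(m_1,\varphi(m_1)+m_2)$ lands in $L$, and the first coordinate of such a product is simply $r_1 m_1$, while the second coordinate is $\varphi(r_1 m_1)+f(r_1)m_2+j\varphi(m_1)+jm_2$.

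For part (1): First I would assume $(r_1,f(r_1)+j)\in (N_1\Join^{\varphi}JM_2:_{R_1\Join^f J}M_1\Join^{\varphi}JM_2)$. Applying this to an arbitrary generator $(m_1,\varphi(m_1)+m_2)$, the product has first coordinate $r_1m_1$, and membership in $N_1\Join^{\varphi}JM_2$ forces $r_1m_1\in N_1$; since $m_1$ ranges over all of $M_1$, we get $r_1\in(N_1:_{R_1}M_1)$. Conversely, if $r_1\in(N_1:_{R_1}M_1)$, then for any generator the first coordinate $r_1m_1$ of the product lies in $N_1$, which is exactly the condition for the product to lie in $N_1\Join^{\varphi}JM_2$ (recall that membership in $N_1\Join^{\varphi}JM_2$ only constrains the first coordinate), so $(r_1,f(r_1)+j)$ is in the residual.

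For part (2): the hypothesis that $f$ and $\varphi$ are epimorphisms is what makes the analogous argument work on the second coordinate. Assuming $(r_1,f(r_1)+j)$ is in the residual of $\overline{N_2}^{\varphi}$, I apply it to generators and use that the second coordinate of the product, $\varphi(r_1m_1)+f(r_1)m_2+j\varphi(m_1)+jm_2$, must lie in $N_2$; writing $\varphi(r_1m_1)+f(r_1)m_2+j\varphi(m_1)+jm_2 = (f(r_1)+j)(\varphi(m_1)+m_2)$ exhibits it as $(f(r_1)+j)$ times an element of $M_2$, and surjectivity of $f$ and $\varphi$ guarantees that $\varphi(m_1)+m_2$ ranges over all of $M_2$ (here one also uses that $JM_2\subseteq M_2$ and, to get everything, that $\varphi(M_1)=M_2$), whence $f(r_1)+j\in(N_2:_{R_2}M_2)$. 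The converse direction is the symmetric computation. The only mildly delicate point — and the one place I would be careful — is checking that as $(m_1,m_2)$ ranges over $M_1\times JM_2$ the second coordinates $\varphi(m_1)+m_2$ do sweep out all of $M_2$; this is immediate from $\varphi$ being onto, so in fact there is no real obstacle and the lemma is, as the paper states, straightforward.
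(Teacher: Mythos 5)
Your proof is correct and is exactly the direct coordinatewise verification the paper has in mind when it declares the lemma straightforward and omits the argument. (One small refinement: in part (2) only the surjectivity of $\varphi$ is actually used, since $\varphi(M_{1})+JM_{2}=M_{2}$ already follows from $\varphi$ being onto; the surjectivity of $f$ plays no role in this lemma.)
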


\begin{theorem}
\label{Amalg}Consider the $(R_{1}\Join^{f}J)$-module $M_{1}\Join^{\varphi
}JM_{2}$ defined as above. Let $S$ be a multiplicatively closed subsets of
$R_{1}$ and $N_{1}$ be submodule of $M_{1}$. Then
\end{theorem}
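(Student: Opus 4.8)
The statement being set up is clearly a characterization of when $N_1 \Join^{\varphi} J M_2$ is a weakly $S \Join^f J$-primary submodule of $M_1 \Join^{\varphi} J M_2$, in terms of $N_1$ being weakly $S$-primary in $M_1$ together with some compatibility conditions. My plan is to first translate the hypothesis $(N_1 \Join^{\varphi} J M_2 :_{R_1 \Join^f J} M_1 \Join^{\varphi} J M_2) \cap (S \Join^f J) = \emptyset$ into the clean equivalent condition $(N_1 :_{R_1} M_1) \cap S = \emptyset$, which follows immediately from Lemma \ref{HA}(1). Then I would fix a candidate weakly $S$-element $s \in S$ for $N_1$ and show that $(s, f(s)+j)$ (for any $j \in J$, or perhaps a specific $j$) serves as a weakly $S \Join^f J$-element for $N_1 \Join^{\varphi} J M_2$.

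The core of the argument is the two-directional implication between the divisibility conditions. For the forward direction, suppose $N_1 \Join^{\varphi} J M_2$ is weakly $S \Join^f J$-primary with element $(s, f(s)+j_0)$. Given $a \in R_1$, $m_1 \in M_1$ with $0 \neq a m_1 \in N_1$, I would lift to the element $(a, f(a))(m_1, \varphi(m_1)) = (a m_1, \varphi(a m_1))$ in $M_1 \Join^{\varphi} J M_2$; this is nonzero since $a m_1 \neq 0$, and it lies in $N_1 \Join^{\varphi} J M_2$ because its first component $a m_1 \in N_1$. Applying the hypothesis and using Lemma \ref{HA}(1) together with the identity $\sqrt{(N_1 \Join^{\varphi} J M_2 : \cdot)} $ relating to $\sqrt{(N_1 :_{R_1} M_1)}$ (via the idealization-type radical formula, or directly), I would extract either $sa \in \sqrt{(N_1 :_{R_1} M_1)}$ or $s m_1 \in N_1$. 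For the converse, given a general element $(r, f(r)+j)$ of $R_1 \Join^f J$ and a general element $(m_1, \varphi(m_1)+m_2)$ of $M_1 \Join^{\varphi} J M_2$ whose product is nonzero and lies in $N_1 \Join^{\varphi} J M_2$, I would read off that the first component $r m_1 \in N_1$; the subtlety is whether $r m_1 \neq 0$ or $r m_1 = 0$. If $r m_1 \neq 0$, weak $S$-primality of $N_1$ gives $sr \in \sqrt{(N_1 :_{R_1} M_1)}$ or $s m_1 \in N_1$, and I would push this back up through Lemma \ref{HA}(1) to conclude. The case $r m_1 = 0$ is where the extra "furthermore" conditions analogous to Theorem \ref{Ideal} will enter: one needs the second-component information $\varphi(r m_1) + f(r) m_2 + j\varphi(m_1) + j m_2 \neq 0$ (since the product is nonzero) to force something, and this likely requires a hypothesis controlling annihilators, paralleling the $a, b \in \mathrm{ann}(N)$ clauses in Theorem \ref{Ideal}.

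The main obstacle I anticipate is exactly this interaction between the "$0 \neq$" condition in the definition of weakly $S$-primary and the two-component structure of the amalgamation module: an element of $M_1 \Join^{\varphi} J M_2$ can be nonzero while its $M_1$-component vanishes, so the reduction to $N_1$ is not clean, and one must handle the degenerate case by hand. I expect the theorem statement (cut off in the excerpt) to include either an additional hypothesis such as $JM_2 = 0$, or $\varphi$ being a monomorphism, or faithfulness/torsion conditions, precisely to tame this case — and a substantial part of the proof will be verifying that under that hypothesis the $M_1$-component of a nonzero element of the relevant product is itself nonzero, so that the degenerate case either cannot occur or is absorbed. A secondary technical point is the computation of $\sqrt{(N_1 \Join^{\varphi} J M_2 :_{R_1 \Join^f J} M_1 \Join^{\varphi} J M_2)}$; I would handle it by combining Lemma \ref{HA}(1) with the standard fact (cited in the excerpt from \cite{AnWi}) that radicals of homogeneous ideals in idealization-type constructions are again homogeneous, together with the fact that $M_1 \Join^{\varphi} J M_2$ is a multiplication-module-like quotient when $M_1$ is, so that $\sqrt{(\cdot : \cdot)}$ corresponds to $\sqrt{(N_1 :_{R_1} M_1)} \Join^f J$ at the level of first components.
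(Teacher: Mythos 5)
Your proposal follows essentially the same route as the paper's proof: translate the disjointness condition via Lemma \ref{HA}, lift $0\neq r_1m_1\in N_1$ to the product $(r_1,f(r_1))(m_1,\varphi(m_1))$ for the forward direction, and split the converse according to whether $r_1m_1=0$, with the degenerate case controlled by an extra condition on the second component. The only gap is that you guess this extra condition might be a structural hypothesis (such as $JM_2=0$ or $\varphi$ injective), whereas the paper states exactly the condition your case analysis would produce, namely that whenever $r_1m_1=0$ with $s_1r_1\notin\sqrt{(N_1:_{R_1}M_1)}$ and $s_1m_1\notin N_1$ for all $s_1\in S$, then $f(r_1)m_2+j\varphi(m_1)+jm_2=0$ for every $j\in J$ and $m_2\in JM_2$.
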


\begin{enumerate}
\item $N_{1}\Join^{\varphi}JM_{2}$ is an $S\Join^{f}J$-primary submodule of
$M_{1}\Join^{\varphi}JM_{2}$ if and only if $N_{1}$ is an $S$-primary
submodule of $M_{1}$.

\item $N_{1}\Join^{\varphi}JM_{2}$ is a weakly $S\Join^{f}J$-primary submodule
of $M_{1}\Join^{\varphi}JM_{2}$ if and only if $N_{1}$ is a weakly $S$-primary
submodule of $M_{1}$ and for $r_{1}\in R_{1}$, $m_{1}\in M_{1}$ with
$r_{1}m_{1}=0$ but $s_{1}r_{1}\notin\sqrt{(N_{1}:_{R_{1}}M_{1})}$ and
$s_{1}m_{1}\notin N_{1}$ for all $s_{1}\in S$, then $f(r_{1})m_{2}+j\phi
(m_{1})+jm_{2}=0$ for every $j\in J$ and $m_{2}\in JM_{2}$.
\end{enumerate}

\begin{proof}
It is easy to verify that $(N_{1}\Join^{\varphi}JM_{2}:_{R_{1}\Join^{f}J}%
M_{1}\Join^{\varphi}JM_{2})\cap(S\Join^{f}J)=\phi$ if and only if
$(N_{1}:_{R_{1}}M_{1})\cap S=\phi$.

(1) $\Longrightarrow)$ Suppose $(s,f(s)+j)$ is a weakly $S\Join^{f}J$-element
of $N_{1}\Join^{\varphi}JM_{2}$. Let $r_{1}\in R_{1}$ and $m_{1}\in M_{1}$
such that $r_{1}m_{1}\in N_{1}$. Then $(r_{1},f(r_{1}))\in R_{1}\Join^{f}J$ ,
$(m_{1},\varphi(m_{1}))\in M_{1}\Join^{\varphi}JM_{2}$ and $(r_{1}%
,f(r_{1}))(m_{1},\varphi(m_{1}))=(r_{1}m_{1},\varphi(r_{1}m_{1}))\in
N_{1}\Join^{\varphi}JM_{2}$. By assumption, we have either
\[
(s,f(s)+j)(r_{1},f(r_{1}))\in\sqrt{(N_{1}\Join^{\varphi}JM_{2}:_{R_{1}%
\Join^{f}J}M_{1}\Join^{\varphi}JM_{2})}%
\]
or%
\[
(s,f(s)+j)(m_{1},\varphi(m_{1}))\in N_{1}\Join^{\varphi}JM_{2}%
\]
In the first case, we conclude by Lemma \ref{HA} that $sr_{1}\in\sqrt
{(N_{1}:_{R_{1}}M_{1})}$. In the second case, we get $sm_{1}\in N_{1}$ and so
$N_{1}$ is an $S$-primary submodule of $M_{1}$.

$\Longleftarrow)$ Let $s$ be a weakly $S$-element of $N_{1}$. Let
$(r_{1},f(r_{1})+j)\in R_{1}\Join^{f}J$ and $(m_{1},\varphi(m_{1})+m_{2})\in
M_{1}\Join^{\varphi}JM_{2}$ such that $(r_{1},f(r_{1})+j)(m_{1},\varphi
(m_{1})+m_{2})\in N_{1}\Join^{\varphi}JM_{2}$. Then $r_{1}m_{1}\in N_{1}$ and
so either $sr_{1}\in\sqrt{(N_{1}:_{R_{1}}M_{1})}$ or $sm_{1}\in N_{1}$. If
$sr_{1}\in\sqrt{(N_{1}:_{R_{1}}M_{1})}$, then by Lemma \ref{HA},
$(s,f(s))(r_{1},f(r_{1})+j)\in\sqrt{(N_{1}\Join^{\varphi}JM_{2}:_{R_{1}%
\Join^{f}J}M_{1}\Join^{\varphi}JM_{2})}$ and if $sm_{1}\in N_{1}$, then
$(s,f(s))(m_{1},\varphi(m_{1})+m_{2})\in N_{1}\Join^{\varphi}JM_{2}$. Thus,
$N_{1}\Join^{\varphi}JM_{2}$ is an $S\Join^{f}J$-primary submodule of
$M_{1}\Join^{\varphi}JM_{2}$ associated to $(s,f(s))\in S\Join^{f}J$.

(2) $\Longrightarrow)$ Suppose $(s,f(s)+j)$ is a weakly $S\Join^{f}J$-element
of $N_{1}\Join^{\varphi}JM_{2}$. Let $r_{1}\in R_{1}$ and $m_{1}\in M_{1}$
such that $0\neq r_{1}m_{1}\in N_{1}$. Then $(0,0)\neq(r_{1},f(r_{1}%
))(m_{1},\varphi(m_{1}))=(r_{1}m_{1},\varphi(r_{1}m_{1}))\in N_{1}%
\Join^{\varphi}JM_{2}$. By assumption, either $(s,f(s)+j)(r_{1},f(r_{1}%
))\in\sqrt{(N_{1}\Join^{\varphi}JM_{2}:_{R_{1}\Join^{f}J}M_{1}\Join^{\varphi
}JM_{2})}$ or $(s,f(s)+j)(m_{1},\varphi(m_{1}))\in N_{1}\Join^{\varphi}JM_{2}%
$. Thus, $sr_{1}\in\sqrt{(N_{1}:_{R_{1}}M_{1})}$ by Lemma \ref{HA} or
$sm_{1}\in N_{1}$ and so $N_{1}$ is weakly $S$-primary in $M_{1}$. We use the
contrapositive to prove the other part. Let $r_{1}\in R_{1}$, $m_{1}\in M_{1}$
with $r_{1}m_{1}=0$ and $f(r_{1})m_{2}+j\phi(m_{1})+jm_{2}\neq0$ for some
$j\in J$ and some $m_{2}\in JM_{2}$. Then
\begin{align*}
(0,0)  &  \neq(r_{1},f(r_{1})+j)(m_{1},\varphi(m_{1})+m_{2})\\
&  =(0,f(r_{1})m_{2}+j\varphi(m_{1})+jm_{2})\in N_{1}\Join^{\varphi}JM_{2}%
\end{align*}
By assumption, either $(s,f(s)+j)(r_{1},f(r_{1})+j)\in\sqrt{(N_{1}%
\Join^{\varphi}JM_{2}:_{R_{1}\Join^{f}J}M_{1}\Join^{\varphi}JM_{2})}$ or
$(s,f(s)+j)(m_{1},\varphi(m_{1})+m_{2})\in N_{1}\Join^{\varphi}JM_{2}$ and so
again $sr_{1}\in\sqrt{(N_{1}:_{R_{1}}M_{1})}$ or $sm_{1}\in N_{1}$ as needed.

$\Longleftarrow)$ Let $s$ be a weakly $S$-element of $N_{1}$, $(r_{1}%
,f(r_{1})+j)\in R_{1}\Join^{f}J$ and $(m_{1},\varphi(m_{1})+m_{2})\in
M_{1}\Join^{\varphi}JM_{2}$ such that
\begin{align*}
(0,0)  &  \neq(r_{1}m_{1},\varphi(r_{1}m_{1})+f(r_{1})m_{2}+j\varphi
(m_{1})+jm_{2})\\
&  =(r_{1},f(r_{1})+j)(m_{1},\varphi(m_{1})+m_{2})\in N_{1}\Join^{\varphi
}JM_{2}%
\end{align*}
If $0\neq r_{1}m_{1}$, then the proof is similar to that of (1). Suppose
$r_{1}m_{1}=0$. Then $f(r_{1})m_{2}+j\varphi(m_{1})+jm_{2}\neq0$ and so by
assumption there exists $s^{\prime}\in S$ such that either $s^{\prime}r_{1}%
\in\sqrt{(N_{1}:_{R_{1}}M_{1})}$ or $s^{\prime}m_{1}\in N_{1}$. Thus,
$(s^{\prime},f(s^{\prime}))(r_{1},f(r_{1})+j)\in\sqrt{(N_{1}\Join^{\varphi
}JM_{2}:_{R_{1}\Join^{f}J}M_{1}\Join^{\varphi}JM_{2})}$ or $(s^{\prime
},f(s^{\prime}))(m_{1},\varphi(m_{1})+m_{2})\in N_{1}\Join^{\varphi}JM_{2}$.
Therefore, $N_{1}\Join^{\varphi}JM_{2}$ is a weakly $S\Join^{f}J$-primary
submodule of $M_{1}\Join^{\varphi}JM_{2}$ associated to $(ss^{\prime
},f(ss^{\prime}))\in S\Join^{f}J$.
\end{proof}

In particular, if we take $S=\left\{  1_{R_{1}}\right\}  $ and consider
$S\Join^{f}0=\left\{  (1_{R_{1}},1_{R_{2}})\right\}  $) in Theorem
\ref{Amalg}, then we get the following corollary.

\begin{corollary}
\label{ca1}Consider the $(R_{1}\Join^{f}J)$-module $M_{1}\Join^{\varphi}%
JM_{2}$ defined as in Theorem \ref{Amalg} and let $N_{1}$ be a submodule of
$M_{1}$. Then
\end{corollary}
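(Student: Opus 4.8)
The plan is to obtain Corollary \ref{ca1} as the instance $S=\{1_{R_1}\}$ of Theorem \ref{Amalg}. The preliminary step is to record the (entirely routine) fact that for the trivial multiplicatively closed set $\{1\}$ the notions ``$\{1\}$-primary'' and ``primary'', and likewise ``weakly $\{1\}$-primary'' and ``weakly primary'', coincide: the distinguished element must be taken to be $1$, and the side condition $(N:_{R}M)\cap\{1\}=\emptyset$ is just properness of $N$ (this is exactly Remark \ref{r1}(3) since $\{1\}\subseteq U(R)$, together with its obvious non-weak analogue). The same comment applies on the $(R_{1}\Join^{f}J)$-side to the set $\{1_{R_{1}}\}\Join^{f}J$, with one observation: in the conclusions of Theorem \ref{Amalg} only the $R_{1}$-component of the distinguished element of $S\Join^{f}J$ intervenes, and for $S=\{1_{R_{1}}\}$ that component is forced to be $1_{R_{1}}$; hence being a (weakly) $(\{1_{R_{1}}\}\Join^{f}J)$-primary submodule of $M_{1}\Join^{\varphi}JM_{2}$ is the same as being a (weakly) primary submodule of $M_{1}\Join^{\varphi}JM_{2}$.

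Granting these identifications, the first assertion of the corollary is Theorem \ref{Amalg}(1) read with $S=\{1_{R_{1}}\}$: $N_{1}\Join^{\varphi}JM_{2}$ is a primary submodule of $M_{1}\Join^{\varphi}JM_{2}$ if and only if $N_{1}$ is a primary submodule of $M_{1}$. For the second assertion I would substitute $S=\{1_{R_{1}}\}$ into Theorem \ref{Amalg}(2) and simplify the auxiliary clause there, where ``$s_{1}r_{1}\notin\sqrt{(N_{1}:_{R_{1}}M_{1})}$ and $s_{1}m_{1}\notin N_{1}$ for all $s_{1}\in S$'' collapses to ``$r_{1}\notin\sqrt{(N_{1}:_{R_{1}}M_{1})}$ and $m_{1}\notin N_{1}$''. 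This yields: $N_{1}\Join^{\varphi}JM_{2}$ is weakly primary in $M_{1}\Join^{\varphi}JM_{2}$ if and only if $N_{1}$ is weakly primary in $M_{1}$ and, for all $r_{1}\in R_{1}$ and $m_{1}\in M_{1}$ with $r_{1}m_{1}=0$ but $r_{1}\notin\sqrt{(N_{1}:_{R_{1}}M_{1})}$ and $m_{1}\notin N_{1}$, one has $f(r_{1})m_{2}+j\varphi(m_{1})+jm_{2}=0$ for every $j\in J$ and $m_{2}\in JM_{2}$.

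No genuine difficulty is expected, as the corollary is a transcription of Theorem \ref{Amalg}. The one point deserving a line of justification is the forward implication in the ``weakly'' case, where a priori one only knows that \emph{some} element of $\{1_{R_{1}}\}\Join^{f}J$ (not necessarily $(1_{R_{1}},1_{R_{2}})$) witnesses weak $S\Join^{f}J$-primariness; this is harmless because, as noted above, the relevant conclusions of Theorem \ref{Amalg} depend only on the $R_{1}$-coordinate of that witness, which equals $1_{R_{1}}$ in any case. With that remark in place the proof is complete.
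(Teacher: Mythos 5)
Your proposal is correct and takes essentially the same route as the paper, which obtains the corollary in one line by specializing Theorem \ref{Amalg} to $S=\{1_{R_{1}}\}$. The only cosmetic difference is that the paper shrinks the multiplicative set to the singleton $\{(1_{R_{1}},1_{R_{2}})\}$ (written there as $S\Join^{f}0$) so that the identification with ordinary (weak) primariness is immediate, whereas you keep $\{1_{R_{1}}\}\Join^{f}J$ and justify, via Lemma \ref{HA}, that only the $R_{1}$-coordinate of the witness matters; both are fine.
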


\begin{enumerate}
\item $N_{1}\Join^{\varphi}JM_{2}$ is a primary submodule of $M_{1}%
\Join^{\varphi}JM_{2}$ if and only if $N_{1}$ is a primary submodule of
$M_{1}$.

\item $N_{1}\Join^{\varphi}JM_{2}$ is a weakly primary submodule of
$M_{1}\Join^{\varphi}JM_{2}$ if and only if $N_{1}$ is a weakly primary
submodule of $M_{1}$ and for $r_{1}\in R_{1}$, $m_{1}\in M_{1}$ with
$r_{1}m_{1}=0$ but $r_{1}\notin\sqrt{(N_{1}:_{R_{1}}M_{1})}$ and $m_{1}\notin
N_{1}$, then $f(r_{1})m_{2}+j\phi(m_{1})+jm_{2}=0$ for every $j\in J$ and
$m_{2}\in JM_{2}$.
\end{enumerate}

\begin{theorem}
\label{Amalg2}Consider the $(R_{1}\Join^{f}J)$-module $M_{1}\Join^{\varphi
}JM_{2}$ defined as in Theorem \ref{Amalg} where $f$ and $\varphi$ are
epimorphisms. Let $S$ be a multiplicatively closed subsets of $R_{2}$ and
$N_{2}$ be a submodule of $M_{2}$. Then
\end{theorem}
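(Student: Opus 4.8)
The plan is to reduce the whole statement to the module $M_2$ through the two surjections naturally attached to the amalgamation, exactly as Theorem \ref{Amalg} is reduced to $M_1$, now using crucially that $f$ and $\varphi$ are epimorphisms so that lifts in the opposite direction are available. Let $\pi\colon R_1\Join^{f}J\to R_2$, $(r,f(r)+j)\mapsto f(r)+j$, a surjective ring homomorphism since $f$ is onto, and $\rho\colon M_1\Join^{\varphi}JM_2\to M_2$, $(m_1,\varphi(m_1)+m_2)\mapsto\varphi(m_1)+m_2$, which is onto since $\varphi$ is. A short computation with the given scalar multiplication gives $\rho((r,f(r)+j)(m_1,\varphi(m_1)+m_2))=(f(r)+j)(\varphi(m_1)+m_2)$, so $\rho$ transports the $(R_1\Join^{f}J)$-action to the $R_2$-action along $\pi$. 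One has $\overline{N_2}^{\varphi}=\rho^{-1}(N_2)$ and $\overline{S}^{\varphi}=\pi^{-1}(S)$, and by Lemma \ref{HA}(2) also $(\overline{N_2}^{\varphi}:M_1\Join^{\varphi}JM_2)=\pi^{-1}((N_2:_{R_2}M_2))$; hence the intersection condition transfers, i.e.\ $(\overline{N_2}^{\varphi}:M_1\Join^{\varphi}JM_2)\cap\overline{S}^{\varphi}=\emptyset$ iff $(N_2:_{R_2}M_2)\cap S=\emptyset$, and since radicals commute with preimages along ring homomorphisms, $\sqrt{(\overline{N_2}^{\varphi}:M_1\Join^{\varphi}JM_2)}=\pi^{-1}(\sqrt{(N_2:_{R_2}M_2)})$. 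This produces the dictionary I will use repeatedly: writing $\sigma=(r_1,f(r_1)+j)$ and $x=(m_1,\varphi(m_1)+m_2)$, we have $\sigma x\in\overline{N_2}^{\varphi}\iff\pi(\sigma)\rho(x)\in N_2$; $\sigma\in\sqrt{(\overline{N_2}^{\varphi}:M_1\Join^{\varphi}JM_2)}\iff\pi(\sigma)\in\sqrt{(N_2:_{R_2}M_2)}$; and $\sigma x=0\iff r_1m_1=0$ and $\pi(\sigma)\rho(x)=0$.

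Part (1) is then immediate. For the ``if'' direction, fix an $S$-element $s$ of $N_2$ and, using surjectivity of $f$, fix $\sigma_0\in\overline{S}^{\varphi}$ with $\pi(\sigma_0)=s$; whenever $\sigma x\in\overline{N_2}^{\varphi}$ the dictionary gives $\pi(\sigma)\rho(x)\in N_2$, so $s\pi(\sigma)\in\sqrt{(N_2:_{R_2}M_2)}$ or $s\rho(x)\in N_2$, and feeding $\sigma_0\sigma$ and $\sigma_0x$ back through the dictionary shows $\sigma_0$ is an $\overline{S}^{\varphi}$-element of $\overline{N_2}^{\varphi}$. For the ``only if'' direction, given $r_2m_2'\in N_2$ use surjectivity of $f$ and $\varphi$ to lift to $\sigma=(r_1,f(r_1))$, $x=(m_1,\varphi(m_1))$ with $\pi(\sigma)=r_2$ and $\rho(x)=m_2'$; then $\sigma x\in\overline{N_2}^{\varphi}$, and the hypothesis together with the dictionary gives that $\pi$ applied to the $\overline{S}^{\varphi}$-element of $\overline{N_2}^{\varphi}$ is an $S$-element of $N_2$.

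For part (2), the implication ``$\overline{N_2}^{\varphi}$ weakly $\overline{S}^{\varphi}$-primary $\Rightarrow$ $N_2$ weakly $S$-primary'' repeats the ``only if'' argument above, since when $r_2m_2'\neq0$ the lift $\sigma x$ is nonzero (its $\rho$-image is). The side condition is forced by contraposition: if it failed there would be $r_1,j,m_1,m_2$ with $(f(r_1)+j)(\varphi(m_1)+m_2)=0$ and $r_1m_1\neq0$ such that, for the fixed $\overline{S}^{\varphi}$-element $\sigma_0$ of $\overline{N_2}^{\varphi}$, neither $\pi(\sigma_0)(f(r_1)+j)\in\sqrt{(N_2:_{R_2}M_2)}$ nor $\pi(\sigma_0)(\varphi(m_1)+m_2)\in N_2$; but then $\sigma x=(r_1m_1,0)$ is a nonzero element of $\overline{N_2}^{\varphi}$, so applying $\sigma_0$ and the dictionary forces exactly one of those, a contradiction. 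Conversely, suppose $N_2$ is weakly $S$-primary with fixed element $s$ and the side condition holds relative to this $s$; fix $\sigma_0\in\overline{S}^{\varphi}$ with $\pi(\sigma_0)=s$. Given $0\neq\sigma x\in\overline{N_2}^{\varphi}$, set $r_2=\pi(\sigma)$, $m_2'=\rho(x)$, so $r_2m_2'\in N_2$: if $r_2m_2'\neq0$ invoke weak $S$-primality of $N_2$, and if $r_2m_2'=0$ then $r_1m_1\neq0$ (as $\sigma x\neq0$) and the side condition applies; either way $sr_2\in\sqrt{(N_2:_{R_2}M_2)}$ or $sm_2'\in N_2$, which the dictionary turns into $\sigma_0\sigma\in\sqrt{(\overline{N_2}^{\varphi}:M_1\Join^{\varphi}JM_2)}$ or $\sigma_0x\in\overline{N_2}^{\varphi}$, so $\overline{N_2}^{\varphi}$ is weakly $\overline{S}^{\varphi}$-primary associated to $\sigma_0$.

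The step requiring care is keeping the associated element of $\overline{S}^{\varphi}$ literally fixed throughout part (2): the branch where $r_2m_2'=0$ must produce its conclusion for the \emph{same} $s$ that governs weak $S$-primality of $N_2$, which is precisely why the side condition must be stated relative to that fixed $s$ (equivalently to $\sigma_0$) rather than with an unquantified ``for some $s'\in S$''. I would also record the harmless identification $\ker\rho=\{(m_1,0):\varphi(m_1)\in JM_2\}$, since it is exactly this that makes the case ``$\rho(\sigma x)=0$ while $\sigma x\neq0$'' coincide with ``$r_1m_1\neq0$ and $(f(r_1)+j)(\varphi(m_1)+m_2)=0$'', i.e.\ the case handled by the side condition.
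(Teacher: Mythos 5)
Your proof is correct and follows essentially the same route as the paper's: both reduce everything to $M_2$ by lifting through the surjections $f$ and $\varphi$, use Lemma \ref{HA} for the colon-ideal (hence radical) correspondence, and split part (2) into the cases $(f(r_1)+j)(\varphi(m_1)+m_2)\neq 0$ and $(f(r_1)+j)(\varphi(m_1)+m_2)=0$, handling the latter via the contrapositive of the side condition; packaging the computations as a transfer dictionary along $\pi$ and $\rho$ is only a cosmetic difference. The one point where you genuinely improve on the paper is the one you flag yourself: in the converse of (2) the paper extracts from the side condition an $s'$ depending on the particular $r_1,j,m_1,m_2$ and then declares $(tt',ss')$ to be the fixed associated element, whereas your insistence that the side condition be read relative to the single fixed weakly $S$-element $s$ is exactly what makes that step legitimate (and is what the forward direction in fact delivers).
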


\begin{enumerate}
\item $\overline{N_{2}}^{\varphi}$ is an $\overline{S}^{\varphi}$-primary
submodule of $M_{1}\Join^{\varphi}JM_{2}$ if and only if $N_{2}$ is an
$S$-primary submodule of $M_{2}$.

\item $\overline{N_{2}}^{\varphi}$ is a weakly $\overline{S}^{\varphi}%
$-primary submodule of $M_{1}\Join^{\varphi}JM_{2}$ if and only if $N_{2}$ is
a weakly $S$-primary submodule of $M_{2}$ and for $r_{1}\in R_{1}$, $m_{1}\in
M_{1}$, $m_{2}\in JM_{2}$, $j\in J$ with $(f(r_{1})+j)(\varphi(m_{1}%
)+m_{2})=0$ but $s(f(r_{1})+j)\notin\sqrt{(N_{2}:_{R_{2}}M_{2})}$ and
$s(\varphi(m_{1})+m_{2})\notin N_{2}$ for all $s\in S$, then $r_{1}m_{1}=0$.
\end{enumerate}

\begin{proof}
In view of Lemma \ref{HA}, we can easily prove that $(\overline{N_{2}%
}^{\varphi}:_{R_{1}\Join^{f}J}M_{1}\Join^{\varphi}JM_{2})\cap\overline
{S}^{\varphi}=\phi$ if and only if $(N_{2}:_{R_{2}}M_{2})\cap S=\phi$.

(1). $\Longrightarrow)$ Suppose $N_{2}$ is an $S$-primary submodule of $M_{2}$
associated to $s=f(t)\in S$. Let $(r_{1},f(r_{1})+j)\in R_{1}\Join^{f}J$ and
$(m_{1},\varphi(m_{1})+m_{2})\in M_{1}\Join JM_{2}$ such that $(r_{1}%
,f(r_{1})+j)(m_{1},\varphi(m_{1})+m_{2})\in\overline{N_{2}}^{\varphi}$. Then
$(f(r_{1})+j)(\varphi(m_{1})+m_{2})\in N_{2}$ and so $s(f(r_{1})+j)\in
\sqrt{(N_{2}:_{R_{2}}M_{2})}$ or $s(\varphi(m_{1})+m_{2})\in N_{2}$. If
$s(f(r_{1})+j)\in\sqrt{(N_{2}:_{R_{2}}M_{2})}$, then by Lemma \ref{HA}, we
have $(t,s)(r_{1},f(r_{1})+j)\in\sqrt{(\overline{N_{2}}^{\varphi}:_{R_{1}%
\Join^{f}J}M_{1}\Join^{\varphi}JM_{2})}$. If $s(\varphi(m_{1})+m_{2})\in
N_{2}$, then $(t,s)(m_{1},\varphi(m_{1})+m_{2})\in\overline{N_{2}}^{\varphi}$.
Therefore, $\overline{N_{2}}^{\varphi}$ is $\overline{S}^{\varphi}$-primary in
$M_{1}\Join^{\varphi}JM_{2}$.

$\Longleftarrow)$ Suppose $\overline{N_{2}}^{\varphi}$ is an $\overline
{S}^{\varphi}$-primary submodule of $M_{1}\Join^{\varphi}JM_{2}$ associated to
$(t,f(t)+j)=(t,s)\in\overline{S}^{\varphi}$. Let $r_{2}=f(r_{1})\in R_{2}$ and
$m_{2}=\varphi(m_{1})\in M_{2}$ such that $r_{2}m_{2}\in N_{2}$. Then
$(r_{1},r_{2})(m_{1},m_{2})\in\overline{N_{2}}^{\varphi}$ where $(r_{1}%
,r_{2})\in R_{1}\Join^{f}J$ and $(m_{1},m_{2})\in M_{1}\Join^{\varphi}JM_{2}$.
By assumption, $(t,s)(r_{1},r_{2})\in\sqrt{(\overline{N_{2}}^{\varphi}%
:_{R_{1}\Join^{f}J}M_{1}\Join^{\varphi}JM_{2})}$ or $(t,s)(m_{1},m_{2}%
)\in\overline{N_{2}}^{\varphi}$. In the first case, Lemma \ref{HA} implies
$sr_{2}\in\sqrt{(N_{2}:_{R_{2}}M_{2})}$. In the second case, we conclude
$sm_{2}\in N_{2}$ and the result follows.

(2). $\Longrightarrow)$ Let $(t,f(t)+j)=(t,s)$ be a weakly $\overline
{S}^{\varphi}$-element of $\overline{N_{2}}^{\varphi}$. Let $r_{2}=f(r_{1})\in
R_{2}$ and $m_{2}=f(m_{1})\in M_{2}$ such that $0\neq r_{2}m_{2}\in N_{2}$.
Then $(0.0)\neq(r_{1},r_{2})(m_{1},m_{2})\in\overline{N_{2}}^{\varphi}$ where
$(r_{1},r_{2})\in R_{1}\Join^{f}J$ and $(m_{1},m_{2})\in M_{1}\Join^{\varphi
}JM_{2}$. Thus, either $(t,s)(r_{1},r_{2})\in\sqrt{(\overline{N_{2}}^{\varphi
}:_{R_{1}\Join^{f}J}M_{1}\Join^{\varphi}JM_{2})}$ or $(t,s)(m_{1},m_{2}%
)\in\overline{N_{2}}^{\varphi}$. Hence, either $sr_{2}\in\sqrt{(N_{2}:_{R_{2}%
}M_{2})}$ by Lemma \ref{HA} or $sm_{2}\in N_{2}$ and $s$ is a weakly
$S$-element of $N_{2}$. For the other part, we use contrapositive. Let
$r_{1}\in R_{1}$, $m_{1}\in M_{1}$, $m_{2}\in JM_{2}$, $j\in J$ with
$(f(r_{1})+j)(\varphi(m_{1})+m_{2})=0$ and suppose $r_{1}m_{1}\neq0$. Then
$(0,0)\neq(r_{1},f(r_{1})+j)(m_{1},\varphi(m_{1})+m_{2})\in\overline{N_{2}%
}^{\varphi}$ and so $(t,s)(r_{1},f(r_{1})+j)\in\sqrt{(\overline{N_{2}%
}^{\varphi}:_{R_{1}\Join^{f}J}M_{1}\Join JM_{2})}$ or $(t,s)(m_{1}%
,\varphi(m_{1})+m_{2})\in\overline{N_{2}}^{\varphi}$. Hence, either
$s(f(r_{1})+j)\in\sqrt{(N_{2}:_{R_{2}}M_{2})}$ or $s(\varphi(m_{1})+m_{2})\in
N_{1}$ and the result follows.

$\Longleftarrow)$ Suppose $s=f(t)\in S$ is a weakly $S$-element of $N_{2}$.
Let $(r_{1},f(r_{1})+j)\in R_{1}\Join^{f}J$ and $(m_{1},\varphi(m_{1}%
)+m_{2})\in M_{1}\Join^{\varphi}JM_{2}$ such that $(0,0)\neq(r_{1}%
,f(r_{1})+j)(m_{1},\varphi(m_{1})+m_{2})\in\overline{N_{2}}^{\varphi}$. Then
$(f(r_{1})+j)(\varphi(m_{1})+m_{2})\in N_{2}$. If $(f(r_{1})+j)(\varphi
(m_{1})+m_{2})=0$, then $r_{1}m_{1}\neq0$. So by assumption, there exists
$s^{\prime}=f(t^{\prime})\in S$ such that $s^{\prime}(f(r_{1})+j)\in
\sqrt{(N_{2}:_{R_{2}}M_{2})}$ or $s^{\prime}(\varphi(m_{1})+m_{2})\in N_{2}$.
It follows that $(t^{\prime},s^{\prime})(r_{1},f(r_{1})+j)\in\sqrt
{(\overline{N_{2}}^{\varphi}:_{R_{1}\Join^{f}J}M_{1}\Join^{\varphi}JM_{2})}$
or $(t^{\prime},s^{\prime})(m_{1},\varphi(m_{1})+m_{2})\in\overline{N_{2}%
}^{\varphi}$. Hence, $\overline{N_{2}}^{\varphi}$ is a weakly $\overline
{S}^{\varphi}$-primary submodule of $M_{1}\Join^{\varphi}JM_{2}$ associated to
$(tt^{\prime},ss^{\prime})$. If $(f(r_{1})+j)(\varphi(m_{1})+m_{2})\neq0$,
then the result follows as in the proof of (1).
\end{proof}

In particular, if we take $S=\left\{  1_{R_{2}}\right\}  $ and consider the
multiplicatively closed subset $\overline{S}^{\varphi}=\left\{  (1_{R_{1}%
},1_{R_{2}})\right\}  $ of $M_{1}\Join^{\varphi}JM_{2}$ in Theorem
\ref{Amalg2}, then we get the following corollary.

\begin{corollary}
\label{ca2}Let $M_{1}\Join^{\varphi}JM_{2}$ and $N_{2}$ be defined as in
Theorem \ref{Amalg2}. Then
\end{corollary}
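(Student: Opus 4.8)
The plan is to obtain both parts of the corollary by specializing Theorem \ref{Amalg2} to the trivial multiplicatively closed set $S=\{1_{R_{2}}\}$. The first step is to record the elementary fact that, for any ring $T$, any $T$-module $P$ and the multiplicatively closed set $S'=\{1_{T}\}$, a submodule $L$ of $P$ is weakly $S'$-primary (respectively $S'$-primary) precisely when $L$ is a proper weakly primary (respectively primary) submodule of $P$: the hypothesis $(L:_{T}P)\cap S'=\emptyset$ just says $L\neq P$, and the only available distinguished element $s\in S'$ is $1_{T}$, so the defining implication ``$sa\in\sqrt{(L:_{T}P)}$ or $sm\in L$'' reduces to ``$a\in\sqrt{(L:_{T}P)}$ or $m\in L$''. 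I would apply this observation twice: once to $S=\{1_{R_{2}}\}\subseteq R_{2}$ and once to $\overline{S}^{\varphi}=\{(1_{R_{1}},1_{R_{2}})\}\subseteq R_{1}\Join^{f}J$.

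With that in hand, part (1) of the corollary is literally part (1) of Theorem \ref{Amalg2} rewritten under this dictionary: ``$\overline{N_{2}}^{\varphi}$ is an $\overline{S}^{\varphi}$-primary submodule of $M_{1}\Join^{\varphi}JM_{2}$'' becomes ``$\overline{N_{2}}^{\varphi}$ is a primary submodule of $M_{1}\Join^{\varphi}JM_{2}$'', and ``$N_{2}$ is an $S$-primary submodule of $M_{2}$'' becomes ``$N_{2}$ is a primary submodule of $M_{2}$''. For part (2), the same translation turns ``weakly $\overline{S}^{\varphi}$-primary'' into ``weakly primary'' and ``weakly $S$-primary'' into ``weakly primary''; moreover the clause in Theorem \ref{Amalg2}(2) reading ``$s(f(r_{1})+j)\notin\sqrt{(N_{2}:_{R_{2}}M_{2})}$ and $s(\varphi(m_{1})+m_{2})\notin N_{2}$ for all $s\in S$'' collapses, since $S=\{1_{R_{2}}\}$, to the single condition ``$f(r_{1})+j\notin\sqrt{(N_{2}:_{R_{2}}M_{2})}$ and $\varphi(m_{1})+m_{2}\notin N_{2}$'', which is exactly the extra hypothesis claimed in the corollary, the conclusion $r_{1}m_{1}=0$ being unchanged.

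The only point requiring a line of verification is that the two ``empty intersection'' side conditions match under the specialization, i.e.\ that $(\overline{N_{2}}^{\varphi}:_{R_{1}\Join^{f}J}M_{1}\Join^{\varphi}JM_{2})\cap\overline{S}^{\varphi}=\emptyset$ if and only if $(N_{2}:_{R_{2}}M_{2})\cap S=\emptyset$; but this equivalence was already established in the opening lines of the proof of Theorem \ref{Amalg2} via Lemma \ref{HA}(2), and for $S=\{1_{R_{2}}\}$ both sides simply say that the relevant submodule is proper. I do not anticipate a genuine obstacle: all the content sits in Theorem \ref{Amalg2}, and the corollary follows by substitution together with the routine identification of weakly $\{1\}$-primary submodules with proper weakly primary submodules; the only mild care needed is in matching the universally quantified clause of Theorem \ref{Amalg2}(2) with its single instantiation at $s=1_{R_{2}}$.
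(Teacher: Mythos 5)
Your proposal is correct and is exactly the route the paper takes: the corollary is obtained by specializing Theorem \ref{Amalg2} to $S=\{1_{R_{2}}\}$ and $\overline{S}^{\varphi}=\{(1_{R_{1}},1_{R_{2}})\}$, identifying weakly $\{1\}$-primary (resp.\ $\{1\}$-primary) submodules with proper weakly primary (resp.\ primary) ones. The paper offers no further argument beyond this substitution, so your spelled-out verification of the dictionary and of the collapse of the quantifier over $s$ matches its intent.
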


\begin{enumerate}
\item $\overline{N_{2}}^{\varphi}$ is a primary submodule of $M_{1}%
\Join^{\varphi}JM_{2}$ if and only if $N_{2}$ is a primary submodule of
$M_{2}$.

\item $\overline{N_{2}}^{\varphi}$ is a weakly primary submodule of
$M_{1}\Join^{\varphi}JM_{2}$ if and only if $N_{2}$ is a weakly primary
submodule of $M_{2}$ and for $r_{1}\in R_{1}$, $m_{1}\in M_{1}$, $m_{2}\in
JM_{2}$, $j\in J$ with $(f(r_{1})+j)(\varphi(m_{1})+m_{2})=0$ but
$(f(r_{1})+j)\notin\sqrt{(N_{2}:_{R_{2}}M_{2})}$ and $(\varphi(m_{1}%
)+m_{2})\notin N_{2}$, then $r_{1}m_{1}=0$.
\end{enumerate}

\begin{corollary}
\label{Dup}Let $N$ be a submodule of an $R$-module $M$, $J$ an ideal of $R$
and $S$ a multiplicatively closed subset of $R$. The following are equivalent:
\end{corollary}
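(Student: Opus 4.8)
The plan is to obtain the corollary as a direct specialization of Theorems \ref{Amalg} and \ref{Amalg2}. Concretely, I would put $R_{1}=R_{2}=R$, $M_{1}=M_{2}=M$, $f=Id_{R}$, $\varphi=Id_{M}$ and $S_{1}=S_{2}=S$. As recorded in the paragraph just before Lemma \ref{HA}, under these identifications $N_{1}\Join^{\varphi}JM_{2}=N\Join J$, $\overline{N_{2}}^{\varphi}=\bar{N}$, $S_{1}\Join^{f}J=S\Join J$ and $\overline{S_{2}}^{\varphi}=\bar{S}$; moreover $Id_{R}$ and $Id_{M}$ are (trivially) epimorphisms, so the hypothesis ``$f$ and $\varphi$ are epimorphisms'' needed to invoke Theorem \ref{Amalg2} holds automatically.

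With this dictionary, the argument is mechanical. Theorem \ref{Amalg}(1) gives that $N\Join J$ is an $S\Join J$-primary submodule of $M\Join J$ if and only if $N$ is an $S$-primary submodule of $M$, while Theorem \ref{Amalg2}(1) gives that $\bar{N}$ is an $\bar{S}$-primary submodule of $M\Join J$ if and only if $N$ is an $S$-primary submodule of $M$. Reading these two biconditionals as a chain through the common middle statement ``$N$ is $S$-primary in $M$'' produces the asserted list of equivalences. (For the weakly $S$-primary analogues one argues the same way from parts (2) of the two theorems, after observing that when $f=Id_{R}$ and $\varphi=Id_{M}$ the side condition ``$f(r_{1})m_{2}+j\varphi(m_{1})+jm_{2}=0$ for all $j\in J$, $m_{2}\in JM_{2}$'' of Theorem \ref{Amalg}(2) becomes ``$(r+j)m_{2}+jm=0$ for all $j\in J$, $m_{2}\in JM$'', and the condition ``$r_{1}m_{1}=0$'' of Theorem \ref{Amalg2}(2) is simply ``$rm=0$''.)

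I do not expect any genuine mathematical obstacle here, since all the substantive work is already done in Theorems \ref{Amalg} and \ref{Amalg2}. The only thing demanding care is bookkeeping: transcribing the general amalgamation data into the duplication language correctly, matching each multiplicatively closed set ($S\Join J$ versus $\bar{S}$) with the right submodule ($N\Join J$ versus $\bar{N}$), and being sure that the surjectivity hypothesis on $f$ and $\varphi$ in Theorem \ref{Amalg2} is explicitly discharged rather than silently assumed. Consequently I would keep the proof itself to essentially one line, citing the two theorems under the substitution $R_{1}=R_{2}=R$, $M_{1}=M_{2}=M$, $f=Id_{R}$, $\varphi=Id_{M}$, $S_{1}=S_{2}=S$.
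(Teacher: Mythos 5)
Your proposal is correct and is exactly the route the paper intends: the corollary is stated without proof as the specialization $R_{1}=R_{2}=R$, $M_{1}=M_{2}=M$, $f=Id_{R}$, $\varphi=Id_{M}$ of Theorems \ref{Amalg}(1) and \ref{Amalg2}(1), using the identifications $N_{1}\Join^{\varphi}JM_{2}=N\Join J$, $\overline{N_{2}}^{\varphi}=\bar{N}$, $S\Join^{f}J=S\Join J$, $\overline{S}^{\varphi}=\bar{S}$ recorded before Lemma \ref{HA}. Your explicit check that the identity maps are epimorphisms (discharging the hypothesis of Theorem \ref{Amalg2}) is the only point of care, and you handle it correctly.
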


\begin{enumerate}
\item $N$ is an $S$-primary submodule of $M$.

\item $N\Join J$ is an $(S\Join J)$-primary submodule of $M\Join J$.

\item $\overline{N}$ is an $\overline{S}$-primary submodule of $M\Join J$.
\end{enumerate}

\begin{corollary}
\label{Dup1}Let $N$ be a submodule of an $R$-module $M$, $J$ an ideal of $R$
and $S$ a multiplicatively closed subset of $R$. The following are equivalent:
\end{corollary}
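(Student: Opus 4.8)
The plan is to read Corollary~\ref{Dup1} as the weakly $S$-primary counterpart of Corollary~\ref{Dup} and to obtain it by specializing Theorems~\ref{Amalg} and~\ref{Amalg2} to the duplication setting. Concretely, I would set $R=R_1=R_2$, $M=M_1=M_2$, $f=\mathrm{Id}_R$ and $\varphi=\mathrm{Id}_M$; as recorded just before Lemma~\ref{HA}, this gives the identifications $N_1\Join^{\varphi}JM_2=N\Join J$, $\overline{N_2}^{\varphi}=\overline N$, $S_1\Join^f J=S\Join J$ and $\overline{S_2}^{\varphi}=\overline S$, and it makes the epimorphism hypothesis needed for Theorem~\ref{Amalg2} automatic. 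Then Theorem~\ref{Amalg}(2) turns directly into the equivalence between ``$N\Join J$ is a weakly $(S\Join J)$-primary submodule of $M\Join J$'' and ``$N$ is a weakly $S$-primary submodule of $M$ together with the stated compatibility condition'', while Theorem~\ref{Amalg2}(2) does the same for $\overline N$ and $\overline S$.

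The only substantive step is to splice these two characterizations into a single chain of equivalences, i.e.\ to see that the ``extra'' conditions produced by the two theorems describe the same phenomenon. I would do this by introducing the coordinate-swap map $\sigma\colon M\Join J\to M\Join J$, $\sigma(m,m')=(m',m)$, which is an isomorphism of $M\Join J$ lying over the ring isomorphism $\sigma\colon R\Join J\to R\Join J$, $\sigma(r,r')=(r',r)$; a short computation gives $\sigma(N\Join J)=\overline N$ and $\sigma(S\Join J)=\overline S$ (for the latter, $(s,s+j)\mapsto(s+j,s)$, and writing this as $(r,r+j')$ with $r=s+j$, $j'=-j$ one has $r+j'=s\in S$). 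Since being a weakly $S$-primary submodule is preserved under an isomorphism over a ring isomorphism --- the argument of Proposition~\ref{f}(2) carries over verbatim once the base ring is allowed to change --- this yields, for free, that $N\Join J$ is weakly $(S\Join J)$-primary in $M\Join J$ if and only if $\overline N$ is weakly $\overline S$-primary in $M\Join J$, and combining this with either Theorem~\ref{Amalg}(2) or Theorem~\ref{Amalg2}(2) closes the loop. Alternatively, one can match the two extra conditions by hand, rewriting $f(r_1)m_2+j\varphi(m_1)+jm_2$ as $(r_1+j)m_2+jm_1$ and unwinding $(f(r_1)+j)(\varphi(m_1)+m_2)=0$, but the automorphism makes this unnecessary.

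I expect the main --- indeed essentially the only --- obstacle to be the bookkeeping: verifying that $\sigma$ respects the $(R\Join J)$-action and carries the two submodules and the two multiplicatively closed sets to one another, and confirming that the compatibility clause as stated in the corollary is literally the specialization coming out of Theorem~\ref{Amalg}(2). No new idea beyond the two theorems already established in this section is required; the corollary is a packaging result.
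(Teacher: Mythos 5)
Your proposal is correct and matches the paper's (implicit) proof: Corollary~\ref{Dup1} is obtained exactly by specializing Theorem~\ref{Amalg}(2) to $R_{1}=R_{2}=R$, $M_{1}=M_{2}=M$, $f=\mathrm{Id}_{R}$, $\varphi=\mathrm{Id}_{M}$, under which $N_{1}\Join^{\varphi}JM_{2}=N\Join J$ and $S_{1}\Join^{f}J=S\Join J$; the swap-automorphism link to $\overline{N}$ via Theorem~\ref{Amalg2} is sound but not needed here, since the $\overline{N}$ statement is the content of Corollary~\ref{Dup2} rather than of~\ref{Dup1}. Your side remark that the specialized compatibility condition actually reads $(r+j)m^{\prime}+jm=0$ is also well taken --- the corollary as printed drops the $jm$ term that Theorem~\ref{Amalg}(2) produces.
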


\begin{enumerate}
\item $N\Join J$ is a weakly $(S\Join J)$-primary submodule of $M\Join J$

\item $N$ is a weakly $S$-primary submodule of $M$ and for $r\in R$, $m\in M$
with $rm=0$ but $sr\notin\sqrt{(N:_{R_{1}}M)}$ and $sm\notin N$ for all $s\in
S$, then $(r+j)m^{\prime}=0$ for every $j\in J$ and $m^{\prime}\in JM_{2}$.
\end{enumerate}

\begin{corollary}
\label{Dup2}Let $N$ be a submodule of an $R$-module $M$, $J$ an ideal of $R$
and $S$ a multiplicatively closed subset of $R$.
\end{corollary}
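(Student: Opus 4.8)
The plan is to deduce Corollary~\ref{Dup2} directly from Theorem~\ref{Amalg2}(2) by specializing the amalgamation data to the duplication setting, exactly as Corollary~\ref{Dup1} was obtained from Theorem~\ref{Amalg}(2). Concretely, I would take $R_1=R_2=R$, $M_1=M_2=M$, $f=\mathrm{id}_R$ and $\varphi=\mathrm{id}_M$. Both $\mathrm{id}_R$ and $\mathrm{id}_M$ are epimorphisms, so the blanket hypotheses of Theorem~\ref{Amalg2} are satisfied. As recorded in the paragraph just before Lemma~\ref{HA}, under these choices the objects occurring in the theorem collapse to the ones appearing in the corollary: $M_1\Join^{\varphi}JM_2=M\Join J$, $\overline{N_2}^{\varphi}=\bar N$, and $\overline{S_2}^{\varphi}=\bar S$; moreover Lemma~\ref{HA}(2) gives $(N_2:_{R_2}M_2)=(N:_RM)$, hence $\sqrt{(N_2:_{R_2}M_2)}=\sqrt{(N:_RM)}$.

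With these substitutions made, the statement of Theorem~\ref{Amalg2}(2) becomes verbatim the asserted equivalence: $\bar N$ is a weakly $\bar S$-primary submodule of $M\Join J$ if and only if $N$ is a weakly $S$-primary submodule of $M$ and, for all $r\in R$, $m\in M$, $m'\in JM$ and $j\in J$ satisfying $(r+j)(m+m')=0$ but $s(r+j)\notin\sqrt{(N:_RM)}$ and $s(m+m')\notin N$ for every $s\in S$, one has $rm=0$. So beyond carrying out this translation, no new argument is required; one simply invokes Theorem~\ref{Amalg2}.

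The only genuinely checkable point — and the closest the proof comes to an obstacle — is making sure the translation of the multiplicatively closed set is faithful: the set $\bar S=\{(r,r+j):r+j\in S\}$ is exactly the instance of $\overline{S_2}^{\varphi}=\{(r,f(r)+j):f(r)+j\in S_2\}$ with $f=\mathrm{id}_R$ and $S_2=S$, and the universally quantified ``for all $s\in S$'' in the theorem ranges over this same set $S_2=S$. One should also observe that the disjointness condition implicit in ``weakly $\bar S$-primary'', namely $(\bar N:_{R\Join J}M\Join J)\cap\bar S=\emptyset$, is equivalent to $(N:_RM)\cap S=\emptyset$ by Lemma~\ref{HA}, matching the hypothesis attached to ``weakly $S$-primary''. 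Once these identifications are spelled out, Corollary~\ref{Dup2} follows immediately.
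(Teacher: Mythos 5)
Your proposal is correct and matches the paper's intended (implicit) proof exactly: Corollary~\ref{Dup2} is obtained by specializing Theorem~\ref{Amalg2}(2) to $R_1=R_2=R$, $M_1=M_2=M$, $f=\mathrm{id}_R$, $\varphi=\mathrm{id}_M$, using the identifications $\overline{N_2}^{\varphi}=\bar N$ and $\overline{S_2}^{\varphi}=\bar S$ recorded before Lemma~\ref{HA}. Note only that your (correct) translation yields the ``weakly $S$-primary'' statement with $\sqrt{(N:_RM)}$, whereas the corollary as printed says ``weakly $S$-prime'' with $(N:_RM)$ --- an apparent typo in the paper, not a gap in your argument.
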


\begin{enumerate}
\item $\overline{N}$ is a weakly $\overline{S}$-prime submodule of $M\Join J$.

\item $N$ is a weakly $S$-prime submodule of $M$ and for $r\in R$, $m\in M$,
$m^{\prime}\in JM$, $j\in J$ with $(r+j)(m+m^{\prime})=0$ but $s(r+j)\notin
(N:_{R}M)$ and $s(m+m^{\prime})\notin N$ for all $s\in S$, then $rm=0$.
\end{enumerate}

Next, we justify that the second condition of Corollary \ref{Dup1} (2) can not
be ignored.

\begin{example}
\label{ex2}Consider the ideal $J=2%
\mathbb{Z}
$ of $%
\mathbb{Z}
$ and the submodule $N=0\times\left\langle \bar{0}\right\rangle $ of the $%
\mathbb{Z}
$-module $M=%
\mathbb{Z}
\times%
\mathbb{Z}
_{6}$. Then $M\Join J=\left\{  (m,m^{\prime})\in M\times M:m-m^{\prime}\in
JM=2%
\mathbb{Z}
\times\left\langle \bar{2}\right\rangle \right\}  $ and $N\Join J=\left\{
(n,m)\in N\times M:n-m\in2%
\mathbb{Z}
\times\left\langle \bar{2}\right\rangle \right\}  $. Obviously, $N$ is a
weakly primary submodule of $M$. On the other hand, $N\Join J$ is not a weakly
primary submodule of $M\Join J$. Indeed, $(2,4)\in%
\mathbb{Z}
\Join J$ and $((0,\bar{3}),(0,\bar{1}))\in M\Join J$ with $(2,4).((0,\bar
{3}),(0,\bar{1}))=((0,\bar{0}),(0,\bar{4}))\in N\Join J$. But $(2,4)\notin
\sqrt{((N\Join J):_{%
\mathbb{Z}
\Join I}(M\Join J))}$ as $2\notin\sqrt{(N:_{%
\mathbb{Z}
}M)}=\left\langle 0\right\rangle $ and $((0,\bar{3}),(0,\bar{1}))\notin N\Join
J$. We note that if we take $r=2$ and $m=(0,\bar{3})\in M$, then clearly,
$rm=0$, $r\notin\sqrt{(N:_{R}M)}=0$ and $m\notin N$ but for $m^{\prime
}=(0,\bar{2})\in JM=2%
\mathbb{Z}
\times\left\langle \bar{2}\right\rangle $, we have $(r+0)m^{\prime}\neq0$.
\end{example}

Also, if the second condition of (2) in Corollary \ref{Dup2} does not hold,
then we may find a weakly $S$-primary submodule $N$ of $M$ such that
$\overline{N}$ is not a weakly $\overline{S}$-primary submodule of $M\Join J$.

\begin{example}
Let $N$, $M$ and $J$ be as in Example \ref{ex2}. Choose $(2,4)\in%
\mathbb{Z}
\Join J$ and $((0,\bar{1}),(0,\bar{3}))\in M\Join J$. Then we have
$(2,4).((0,\bar{1}),(0,\bar{3}))\in\overline{N}$ but clearly $(2,4)\notin
\sqrt{(\bar{N}:_{%
\mathbb{Z}
\Join I}(M\Join J))}$ and $((0,\bar{1}),(0,\bar{3}))\notin\overline{N}$.
Therefore, $\bar{N}$ is not a weakly primary submodule of $M\Join J$.
\end{example}

\end{document}